\newtheorem{theorem}{Theorem}
\newtheorem{lemma}[theorem]{Lemma}
\newenvironment{proof}{\noindent{\bf Proof.}}{\hspace*{2mm}~$\square$}
\newcommand{\Z}{\mathbb{Z}}
\newcommand{\R}{\mathbb{R}}
\newcommand{\Lat}{\mathscr{L}}
\newcommand{\norm}[1]{|\!|#1|\!|}
\newcommand{\ind}{\mathbf{1}}
\newcommand{\ep}{\epsilon}
\newcommand{\n}{\hspace*{-5pt}}
\DeclareMathOperator{\tr}{Tr}
\DeclareMathOperator{\de}{Det}
\DeclareMathOperator{\vol}{Vol}
\DeclareMathOperator{\poisson}{Poisson}
\DeclareMathOperator{\geometric}{Geometric}
\DeclareMathOperator{\exponential}{Exponential}
\begin{document}

\begin{frontmatter}
\title     {The contact process with an asymptomatic state}
\runtitle  {The contact process with an asymptomatic state}
\author    {Lamia Belhadji, Nicolas Lanchier\thanks{Nicolas Lanchier was partially supported by NSF grant CNS-2000792.} and Max Mercer}
\runauthor {Lamia Belhadji, Nicolas Lanchier and Max Mercer}
\address   {Faculty of Exact Sciences \\ and Computer Science \\ University of Abdelhamid Ibn Badis Mostaganem \\ Algeria \\ lamia.belhadji@univ-mosta.dz}
\address   {School of Mathematical \\ and Statistical Sciences \\ Arizona State University \\ Tempe, AZ 85287, USA. \\ nicolas.lanchier@asu.edu \\ mamerce1@asu.edu}

\maketitle

\begin{abstract} \ \
 In order to understand the cost of a potentially high infectiousness of symptomatic individuals or, on the contrary, the benefit of social distancing, quarantine, etc. in the course of an infectious disease, this paper considers a natural variant of the popular contact process that distinguishes between asymptomatic and symptomatic individuals.
 Infected individuals all recover at rate one but infect nearby individuals at a rate that depends on whether they show the symptoms of the disease or not.
 Newly infected individuals are always asymptomatic and may or may not show the symptoms before they recover.
 The analysis of the corresponding mean-field model reveals that, in the absence of local interactions, regardless of the rate at which asymptomatic individuals become symptomatic, there is an epidemic whenever at least one of the infection rates is sufficiently large.
 In contrast, our analysis of the interacting particle system shows that, when the rate at which asymptomatic individuals become symptomatic is small and the asymptomatic individuals are not infectious, there cannot be an epidemic even when the symptomatic individuals are highly infectious.
\end{abstract}

\begin{keyword}[class=AMS]
\kwd[Primary ]{60K35}
\end{keyword}

\begin{keyword}
\kwd{Contact process; Forest fire model; Coupling; Block construction; Branching process.}
\end{keyword}

\end{frontmatter}


\section{Introduction}
\label{sec:intro}
 Infectious diseases commonly spread through the transfer of bacteria from one person to another either directly through talking, coughing, sneezing, etc. or indirectly through touching and subsequently infecting objects which are then touched by another person.
 It is also common that infected individuals do not immediately exhibit the symptoms of the disease, which leads to two categories of infected individuals that we call asymptomatic individuals~(the ones who do not exhibit the symptoms yet) and symptomatic individuals~(the ones who exhibit the symptoms).
 Typically, asymptomatic individuals are less contagious but unaware that they have contracted the disease, whereas symptomatic individuals are more contagious but can take precautions~(use of a mask, social distancing, quarantine, use of a condom, etc.) to limit the spread of the disease.
 In particular, the characteristics of the disease and the social behavior of the population induce a variability in the rate at which the disease spreads from asymptomatic versus symptomatic individuals. \\
\indent
 To understand the cost of a potentially high infectiousness of symptomatic individuals due to increased spread of bacteria via frequent coughing, sneezing, etc., or on the contrary, the benefit of social distancing, quarantine, etc., we study a natural variant of Harris'~\cite{Harris_1974} contact process in which infected individuals are divided into two categories: asymptomatic individuals and symptomatic individuals.
 Like in the contact process, individuals are located on the~$d$-dimensional integer lattice, and infected individuals infect their healthy neighbors by contact, and recover at rate one.
 However, the rate at which infected individuals infect their neighbors now depends on whether they are asymptomatic or symptomatic, with newly infected individuals being always asymptomatic before possibly transitioning to the symptomatic state.
 More precisely, each site is in state
 $$ 0 = \hbox{healthy}, \qquad 1 = \hbox{infected/asymptomatic}, \qquad \hbox{or} \qquad 2 = \hbox{infected/symptomatic}, $$
 so the state of the entire system at time~$t$ is a spatial configuration
 $$ \xi_t : \Z^d \longrightarrow \{0, 1, 2 \} \quad \hbox{where} \quad \xi_t (x) = \hbox{state at site~$x$ at time~$t$}. $$
 We assume for simplicity that individuals can only interact with their nearest neighbors but most of our results easily extend to larger ranges of interactions.
 To describe the dynamics~(local interactions) in the nearest-neighbor case, we let
 $$ N_x = \{y \in \Z^d : \norm{x - y} = 1 \} \quad \hbox{for all} \quad x \in \Z^d, $$
 where~$\norm{\cdot}$ refers to the Euclidean norm, be the set of nearest neighbors of site~$x$.
 Then, letting for each type~$i$ of infected individuals, each site~$x$ and each configuration~$\xi$,
 $$ \begin{array}{l} f_i (x, \xi) = (1/2d) \,\sum_{y \in N_x} \ind \{\xi (y) = i \} \end{array} $$
 be the fraction of neighbors of site~$x$ that are in state~$i$, the contact process with an asymptomatic state evolves according to the local transition rates
 $$ \begin{array}{rclcrcl}
      0 \to 1 & \hbox{at rate} & \beta_1 f_1 (x, \xi) + \beta_2 f_2 (x, \xi), & \quad & 1 \to 2 & \hbox{at rate} & \gamma, \vspace*{4pt} \\
      1 \to 0 & \hbox{at rate} & 1, & \quad & 2 \to 0 & \hbox{at rate} & 1. \end{array} $$
 The first transition rate indicates that healthy individuals get infected from nearby asymptomatic individuals at rate~$\beta_1$ and from nearby symptomatic individuals at rate~$\beta_2$.
 Though the infection rate~$\beta_2$ is expected to be larger than~$\beta_1$, factors such as social distancing and quarantine may contribute to make~$\beta_2$ smaller than the infection rate~$\beta_1$.
 The two transition rates at the bottom indicate that infected individuals recover at rate one regardless of whether they are symptomatic or not.
 Finally, the second transition rate at the top indicates that asymptomatic individuals may become symptomatic before they recover.
 For small~$\gamma$, most of the infected individuals will recover before showing any symptoms and may be unaware that they were ever infected, whereas, for large~$\gamma$, most of the infected individuals will quickly show the symptoms of the disease. \\
\indent
 Our spatial stochastic epidemic model is most closely related to Krone's~\cite{Krone_1999} two-stage contact process and Deshayes'~\cite{Deshayes_2014} contact process with aging.
 In these two models, state~0 is interpreted as an empty site, while positive states represent the age of an individual.
 The two-stage contact process includes only two age classes, namely,~1~=~juvenile and~2~=~adult.
 Adults give birth onto nearby empty sites to a juvenile at rate~$\beta_2$ and die at rate one, while juveniles cannot give birth, transition to the adult stage at rate~$\gamma$, and die at a possibly higher rate~$1 + \delta$.
 In particular, the two-stage contact process with~$\delta = 0$ is equivalent to our epidemic model with~$\beta_1 = 0$ in which asymptomatic individuals are not infectious.
 In the contact process with aging, the individuals' age is indexed by the positive integers.
 Individual with age~$i$ give birth onto nearby empty sites to an individual with age~1 at rate~$\beta_i$ with the condition~$\beta_1 < \beta_2 < \beta_3 < \cdots$, die at rate one regardless of their age, and transition to the next age class at rate~$\gamma$.
 In particular, the contact process with aging truncated at age~2 is equivalent to our epidemic model with~$\beta_1 < \beta_2$ in which the asymptomatic individuals spread the disease at a slower rate than the symptomatic individuals. \\
\indent
 Assuming that the population is homogeneously mixing, and letting~$u_1$ be the density of infected individuals who are asymptomatic and~$u_2$ be the density of infected individuals who are symptomatic, the system is described by the mean-field model
 $$ u_1' = (\beta_1 u_1 + \beta_2 u_2)(1 - u_1 - u_2) - (1 + \gamma) u_1 \quad \hbox{and} \quad u_2' = \gamma u_1 - u_2. $$
 In this context, we say that the infection survives when
\begin{equation}
\label{eq:mean-field-survival}
\begin{array}{rcl} (u_1 + u_2)(0) > 0 & \Longrightarrow & \lim_{t \to \infty} (u_1 + u_2)(t) > 0. \end{array}
\end{equation}
 The analysis of the mean-field model in Section~\ref{sec:mean-field} shows that, in addition to the disease-free equilibrium~$p_0 = (0, 0)$, there is a nontrivial fixed point~$p_{12}$ if and only if~$\beta_1 + \gamma \beta_2 > 1 + \gamma$, and that this fixed point is globally stable.
 In particular, for all~$0 < \gamma < \infty$, even if one of the two infection rates is equal to zero, the infection survives whenever the other infection rate is large.
 Our analysis of the spatial model again shows that, for all~$0 < \gamma < \infty$ and all~$\beta_2$, the infection survives provided the infection rate~$\beta_1$ is sufficiently large.
 However, when~$\gamma$ is small and~$\beta_1 = 0$, the infection dies out regardless of the infection rate~$\beta_2$, which strongly contrasts with the behavior of the mean-field model.
 For the interacting particle system, we distinguish two types of survival/extinction.
 Starting from a translation-invariant distribution with infinitely many~1s and~2s, we say that the infection survives or dies out locally based on the limiting density of infected individuals:
 $$ \begin{array}{c}
    \hbox{local survival if} \ \lim_{t \to \infty} P (\xi_t (0) \neq 0) > 0, \quad \hbox{local extinction if} \ \lim_{t \to \infty} P (\xi_t (0) \neq 0) = 0. \end{array} $$
 In contrast, when starting from a single infected individual at the origin, we say that the infection survives or dies out globally based on the number of infected individuals in the system: identifying the process with the set of infected sites, we have
 $$ \begin{array}{c}
    \hbox{global survival if} \ P (\xi_t \neq \varnothing \ \hbox{for all} \ t) > 0, \quad \hbox{global extinction if} \ P (\xi_t \neq \varnothing \ \hbox{for all} \ t) = 0. \end{array} $$
 Intuitively, the larger the infection rates, the larger the infected region:
\begin{equation}
\label{eq:betas-monotone}
\begin{array}{l}
  P (\xi_t (0) \neq 0) \ \hbox{is nondecreasing with respect to~$\beta_1$ and~$\beta_2$}. \end{array}
\end{equation}
 In addition, because increasing~$\gamma$ decreases the density of asymptomatic individuals but increases the density of symptomatic individuals, we also expect that
\begin{equation}
\label{eq:gamma-monotone}
\begin{array}{rcl}
\beta_1 < \beta_2 & \Longrightarrow & P (\xi_t (0) \neq 0) \ \hbox{is nondecreasing with respect to~$\gamma$}, \vspace*{4pt} \\
\beta_1 > \beta_2 & \Longrightarrow & P (\xi_t (0) \neq 0) \ \hbox{is nonincreasing with respect to~$\gamma$}. \end{array}
\end{equation}
 Coupling processes with different parameters shows~\eqref{eq:betas-monotone} and~\eqref{eq:gamma-monotone} when~$\beta_1 < \beta_2$.
\begin{figure}[t!]
\centering
\scalebox{0.40}{\input{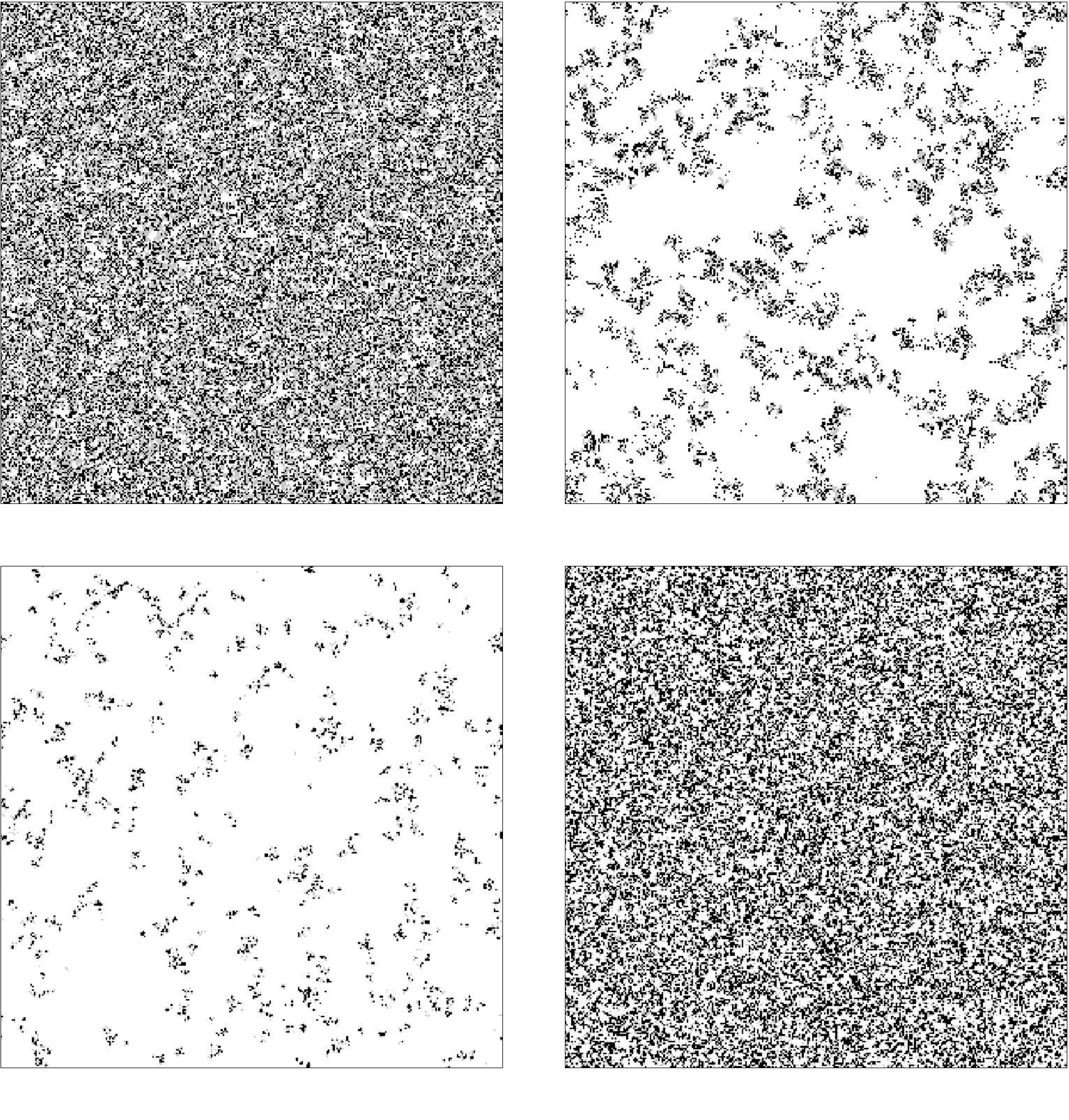_t}}
\caption{\upshape{
 Snapshots of the process on the~$300 \times 300$ torus at time~200.
 In all the pictures, the white sites represent the individuals who are healthy, the gray sites the individuals who are infected but asymptomatic, and the black sites the individuals who are infected and symptomatic.
 The simulations illustrate the reversed monotonicity with respect to the parameter~$\gamma$ depending on the order of the two infection rates~$\beta_1$ and~$\beta_2$.}}
\label{fig:IPS}
\end{figure}
 Though we believe that these results also hold when~$\beta_1 > \beta_2$, which is supported by numerical simulations of the process~(see~Figure~\ref{fig:IPS}), similar couplings fail in this case.
 Monotonicity implies that, two of the parameters being fixed, there is at most one phase transition~(from local/global extinction to local/global survival) in the direction of the third parameter.
 To prove the existence of phase transitions, we first compare the process with the basic contact process~\cite{Harris_1974}, the system in which sites can be in state~0~=~healthy or in state~1~=~infected, with transition rates
 $$ \begin{array}{rclcrcl}
      0 \to 1 & \hbox{at rate} & \beta f_1 (x, \xi) & \quad \hbox{and} \quad & 1 \to 0 & \hbox{at rate} & 1. \end{array} $$
 The main result about the contact process states that there exists a nondegenerate~(positive and finite) critical value~$\beta_c$ that depends on the spatial dimension such that
\begin{equation}
\label{eq:survival}
\begin{array}{rcl} \hbox{the infection survives locally/globally} & \Longleftrightarrow & \beta > \beta_c. \end{array}
\end{equation}
 In particular, the infection dies out at the critical value~$\beta_c$, which was proved by Bezuidenhout and~Grimmett~\cite{Bezuidenhout_Grimmett_1990}.
 For a review of this process, see, e.g., Liggett~\cite{Liggett_1985,Liggett_1999} and Lanchier~\cite{Lanchier_2024}.
 Our epidemic model can be viewed as a mixture of the contact process with parameter~$\beta_1$ and the contact process with parameter~$\beta_2$, with the relative weight of the second contact process being measured by the rate~$\gamma$ at which asymptomatic individuals become symptomatic.
 Observing that the rate at which infected individuals infect their neighbors is between~$\beta_1 \wedge \beta_2$ and~$\beta_1 \vee \beta_2$, and that the rate at which they recover is equal to one regardless of their state~(asymptomatic or symptomatic), it follows from~\eqref{eq:survival} and a standard coupling argument that
\begin{equation}
\label{eq:contact}
\begin{array}{rcl}
\hbox{a.} \ \ \beta_1 \leq \beta_c \quad \hbox{and} \quad \beta_2 \leq \beta_c & \Longrightarrow & \hbox{local and global extinction}, \vspace*{4pt} \\
\hbox{b.} \ \ \beta_1 > \beta_c \quad \hbox{and} \quad \beta_2 > \beta_c & \Longrightarrow & \hbox{local and global survival}. \end{array}
\end{equation}
 We now look at the parameter regions where one of the two infection rates is subcritical and the other one supercritical, in which case the behavior of the process is less obvious. \\ \\
 {\bf Asymptomatic = subcritical and symptomatic = supercritical} \vspace*{5pt} \\
 To begin with, note that taking~$\gamma = 0$ blocks the access to the symptomatic state, so the process reduces to the basic contact process with parameter~$\beta_1$.
 On the contrary, taking~$\gamma = \infty$ makes the transition~$1 \to 2$ instantaneous, thus removing the asymptomatic state, so the process reduces to the basic contact process with parameter~$\beta_2$.
 Using these observations together with block constructions and perturbation techniques, we will prove that
\begin{theorem}
\label{th:top-left}
 Starting with a positive density of~1s and~2s,
\begin{equation}
\label{eq:top-left}
\begin{array}{rcl}
\beta_1 < \beta_c \quad \hbox{and} \quad \beta_2 > \beta_c & \Longrightarrow & \left\{\begin{array}{l}
\hbox{a. local survival for all~$\gamma < \infty$ large}, \vspace*{4pt} \\
\hbox{b. local extinction for all~$\gamma > 0$ small}. \end{array} \right. \end{array}
\end{equation}
\end{theorem}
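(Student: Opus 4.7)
Both implications in~\eqref{eq:top-left} are established by perturbing about a degenerate value of~$\gamma$. At~$\gamma=\infty$ the asymptomatic state is instantaneous, so the dynamics reduces to the basic contact process with parameter~$\beta_2>\beta_c$, which survives locally. At~$\gamma=0$ the transition~$1\to 2$ is blocked, so once the initial~$2$'s die out at rate one the dynamics reduces to the basic contact process with parameter~$\beta_1<\beta_c$, which dies out locally. Part~(a) is a continuity statement on the supercritical side and part~(b) a continuity statement on the subcritical side.

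For part~(a), I would import the Bezuidenhout--Grimmett block construction for the supercritical~$\beta_2$-contact process. There exist parameters~$L$ and~$T$ and a good event~$G_{L,T}$, measurable with respect to finitely many Poisson marks of the graphical representation inside a bounded space-time box, such that~$P(G_{L,T})$ is arbitrarily close to~$1$ and such that~$G_{L,T}$ produces enough descendants in two translated sub-boxes to dominate a supercritical oriented site percolation. For~$\gamma$ large, the three-state process on this box is close in law to the~$\gamma=\infty$ process: every newly infected site becomes a~$2$ after an~$\exponential(1+\gamma)$-distributed delay with conditional probability~$\gamma/(1+\gamma)$, and during that delay at most~$O(1/\gamma)$ spurious~$\beta_1$-infections are emitted. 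A simple coupling between the two graphical constructions then shows that~$G_{L,T}$ continues to hold with probability close to~$1$ for all sufficiently large~$\gamma$, and the standard renormalization argument yields local survival from any translation-invariant initial distribution with positive density of~$1$'s and~$2$'s.

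For part~(b), I would attach to each symptomatic individual a \emph{family tree} of asymptomatic descendants read off the graphical representation: the root~$2$ appears at some~$(x_0,s_0)$ and, during its~$\exponential(1)$-distributed lifetime, creates~$1$'s at its neighbors through the~$\beta_2$-arrows; the tree is then grown through the subsequent~$\beta_1$-arrows, and any branch is pruned the moment the~$1$ at that site experiences a~$\gamma$-mark, that is, becomes a new~$2$. Inside the tree the~$1$'s evolve as a subcritical~$\beta_1$-contact process killed at every site at rate~$\gamma$. Writing~$K=\int_0^{\infty}E\,|\xi_t^{\{0\}}|\,dt<\infty$ for the expected total person-time of the pure~$\beta_1$-contact process started from a single site, which is finite because~$\beta_1<\beta_c$, the expected total person-time in the tree is bounded by~$\beta_2\,K$, and consequently the expected number of offspring~$2$'s of the root is bounded by~$\gamma\,\beta_2\,K$. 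Choosing~$\gamma<1/(\beta_2 K)$ makes the embedded Galton--Watson process of~$2$'s subcritical, so it dies out almost surely.

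To convert extinction of each individual family into local extinction under a translation-invariant initial distribution, I would run the graphical construction backwards from~$(0,t)$ and work with the set~$\zeta^{t}$ of sites~$y$ for which an initial infection at~$y$ could possibly have produced a non-zero state at~$(0,t)$. By construction~$\{\xi_t(0)\ne 0\}=\{\zeta^{t}\cap\xi_0\ne\varnothing\}$, and the same branching argument applied in backward time shows that~$\zeta^{t}$ is dominated by a subcritical multi-type branching process, so that~$E\,|\zeta^{t}|\to 0$ and in particular~$P(\xi_t(0)\ne 0)\to 0$ as~$t\to\infty$. The main obstacle will be this final step: because the three-state process is not attractive in any useful order, one cannot simply propagate extinction of one family by monotonicity, which is what forces the backward graphical-duality argument above together with the sharp input~$K<\infty$ from subcritical contact process theory.
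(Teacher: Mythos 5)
Your part (a) follows the paper's proof essentially verbatim: import the Bezuidenhout--Grimmett block events for the supercritical~$\beta_2$-contact process, then observe that for~$\gamma$ large the three-state dynamics can be made to agree with the~$\gamma=\infty$ process on the finitely many relevant marks inside each block with probability close to~$1$, and run the renormalization. The paper formalizes the coupling event as ``after every arrowhead inside the block a~$\gamma$-dot occurs before the next outgoing arrow,'' and bounds its failure probability by a Poisson tail plus an exponential-race estimate, which is the clean version of your ``$O(1/\gamma)$ spurious infections'' heuristic. No issues here.

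Your part (b) takes a genuinely different route, and it is where the proposal has a gap. The paper does \emph{not} use a branching-process comparison for Theorem~\ref{th:top-left}.b; it runs a second block construction on~$\Lat_2=\Z^d\times\Z_+$, with~$E_{m,n}$ the event that the inner sub-box is completely healthy, uses the Bezuidenhout--Grimmett~(1991) exponential decay of radius and duration for the subcritical~$\beta_1$-contact process to make~$P(E_{m,n})$ close to~$1$, invokes the result of van den Berg, Grimmett and Schinazi that closed paths in highly supercritical oriented site percolation have exponentially decaying length, and then perturbs at~$\gamma=0$ by forcing no dots in the block. Your alternative --- a Galton--Watson process of~$2$'s with offspring mean~$\gamma\beta_2 K$ where~$K$ is the susceptibility of the subcritical~$\beta_1$-contact process --- is an attractive idea in the spirit of the paper's proof of Theorem~\ref{th:two-stage}, and the offspring-mean computation looks right for establishing \emph{global} extinction from a single~$2$. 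What does not go through as written is your final step to \emph{local} extinction: you appeal to a ``backward graphical-duality argument'' and claim the dual set~$\zeta^t$ is ``dominated by a subcritical multi-type branching process.'' Unlike the contact process, the three-state model is not additive (the effect of a type-$1$ arrow depends on whether its source is a~$1$ or a~$2$, which in turn depends on the rest of the configuration), so there is no ready-made dual process, and the domination you assert would need to be constructed from scratch; as stated it is a placeholder, not an argument. The fix that stays within your framework is to argue forward, as the paper does in Lemma~\ref{lem:GWBP-extinction} for Theorem~\ref{th:two-stage}: bound the set of sites ever reachable from each initial infected site by the branching cluster (this is a statement about paths in the graphical representation and does not require duality), show that the cluster radius has an exponential tail (subcritical GW plus the exponential tail on the displacement of each~$\beta_1$-brood supplied by Bezuidenhout--Grimmett), and sum over starting points outside and inside a large box. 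With that replacement your argument would be a legitimate alternative to the paper's block construction, trading a more explicit constant~$\gamma<1/(\beta_2 K)$ for a heavier reliance on the susceptibility bound~$K<\infty$ for the subcritical contact process.

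Two smaller points. First, you should say explicitly that your~$K$-bound uses monotonicity in the death rate: the~$1$'s in the tree die at rate~$1+\gamma$, not~$1$, so the tree's person-time is dominated by that of the rate-$1$ process, which is what makes~$K$ the right constant. Second, the claim that ``at most~$O(1/\gamma)$ spurious~$\beta_1$-infections are emitted'' per newly infected site is a statement about expectations; the block argument needs a probability bound that \emph{zero} such arrows occur before the~$\gamma$-dot for \emph{all} of the~$O(1)$-many infections created inside the block, which is exactly what the paper's events~$H_{m,n}$ and~$G_{m,n}^\infty$ are designed to control.
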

 We now look at the~$\beta_1 = 0$ case in which the asymptomatic individuals cannot infect their healthy neighbors.
 In this context, the transition rates of the epidemic model reduce to
\begin{equation}
\label{eq:two-stage}
\begin{array}{rclcrcl}
  0 \to 1 & \hbox{at rate} & \beta_2 f_2 (x, \xi), & \quad & 1 \to 2 & \hbox{at rate} & \gamma, \vspace*{4pt} \\
  1 \to 0 & \hbox{at rate} & 1, & \quad & 2 \to 0 & \hbox{at rate} & 1. \end{array}
\end{equation}
 As mentioned earlier, this process is the particular case of Krone's~\cite{Krone_1999} two-stage contact process with parameter~$\delta = 0$.
 Using an idea due to Foxall~\cite{Foxall_2015} to compare the epidemic model with a certain subcritical Galton--Watson branching process, we will prove that
\begin{theorem}
\label{th:two-stage}
 Starting with a positive density of~1s and~2s,
 $$ \begin{array}{rcl} \gamma < 1 / (4d - 1) \quad \hbox{and} \quad \beta_1 = 0 & \Longrightarrow & \hbox{local extinction for all \,$\beta_2 \geq 0$}. \end{array} $$
\end{theorem}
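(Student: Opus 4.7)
Since $\beta_1 = 0$, only the symptomatic individuals transmit the disease, and every newly infected site must first pass through the asymptomatic state before generating further infections. Following a branching-process comparison in the spirit of Foxall~\cite{Foxall_2015}, the plan is to declare each $0 \to 1$ transition in the graphical representation a particle and define its children to be the $0 \to 1$ events at nearest neighbors that are triggered by this particular site during its own subsequent symptomatic phase. If I can show that the offspring mean stays strictly below one under the hypothesis $\gamma < 1/(4d-1)$, the resulting domination by a subcritical Galton--Watson process will drive the system to local extinction.

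Starting from a $0 \to 1$ event at a site $y$, the site either recovers directly with probability $1/(1+\gamma)$, producing no offspring, or it transitions to the symptomatic state with probability $p := \gamma/(1+\gamma)$ and enjoys an $\exponential(1)$ symptomatic lifetime. The central ingredient is a uniform-in-$\beta_2$ bound on the expected number of $0 \to 1$ events at a fixed neighbor $z$ during $y$'s symptomatic lifetime. Between any two consecutive $0 \to 1$ events at $z$, the site $z$ must traverse state~1 or~2 and then recover back to state~0, so the total number of $0 \to 1$ events at $z$ in any time window is at most one plus the number of $1 \to 0$ or $2 \to 0$ transitions at $z$ in that window. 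Recoveries at $z$ occur at rate at most~$1$ irrespective of the value of $\beta_2$, so their expected number during the $\exponential(1)$ symptomatic phase of $y$ is at most~$1$, yielding a per-neighbor bound of~$2$. Summing over the $2d$ neighbors of $y$ and multiplying by $p$ gives an offspring mean at most $4dp = 4d\gamma/(1+\gamma)$, which is strictly less than~$1$ precisely when $\gamma < 1/(4d-1)$. It is crucial here that the bound is independent of $\beta_2$; this is what allows the conclusion to survive even in the regime where symptomatic individuals are arbitrarily infectious.

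The main obstacle is then to turn subcriticality of the branching envelope into local extinction starting from a translation-invariant initial distribution with infinitely many $1$s and $2$s, since subcriticality only gives almost sure extinction from a finite seed. I would proceed by attaching to each initially infected site an independent copy of the dominating subcritical branching process and invoking the graphical construction to observe that $\{\xi_t(0) \neq 0\}$ forces at least one envelope to contain a particle located at the origin at some time close to~$t$. By translation invariance and a union bound, this reduces the problem to showing that the expected number of envelope particles located at a fixed site at time~$t$ tends to~$0$ as $t \to \infty$, which follows from the exponential Malthusian decay of a subcritical continuous-time branching process combined with the nearest-neighbor, bounded-offspring structure of the envelope.
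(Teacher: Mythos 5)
Your proposal follows essentially the same route as the paper: a Foxall-style stochastic domination of the infection by a subcritical Galton--Watson process, with the same per-neighbor bound of~$2$ and the same offspring mean~$4d\gamma/(1+\gamma) < 1$ under the hypothesis~$\gamma < 1/(4d-1)$, followed by a passage from global to local extinction. The two minor differences are (i) the paper first pushes~$\beta_2 = \infty$ via the monotonicity established in Section~\ref{sec:harris} (which makes infections instantaneous and the branching comparison cleaner: the number of re-infections of a fixed neighbor before the parent recovers becomes a shifted geometric with mean~$1$), whereas you argue directly at finite~$\beta_2$ by bounding the number of~$0 \to 1$ events at a neighbor by one plus the number of recoveries there during the parent's symptomatic phase --- both give the same constant; and (ii) for local extinction the paper combines the exponential decay in radius (Lemma~\ref{lem:GWBP-radius}) with the a.s.\ finite extinction time (Lemma~\ref{lem:GWBP-time}) via a union bound over the exterior of a large box, whereas you invoke Malthusian decay of the continuous-time envelope together with its nearest-neighbor, bounded-offspring structure. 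Your version of step~(ii) is stated more loosely than the paper's: what is actually needed after the union bound and translation invariance is that the expected \emph{total} number of particles alive at time~$t$ tends to zero, which is most easily obtained --- exactly as in the paper --- by dominated convergence using the a.s.\ finite extinction time and the finite expected total progeny~$1/(1-\mu)$, rather than by appealing to the full Malthusian asymptotics.
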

 Letting~$\beta_1 = 0$ and~$\beta_+ > \beta_c$ be fixed, it follows from Theorem~\ref{th:top-left} and the monotonicity with respect to the parameters that there exists~$\gamma_+ = \gamma_+ (\beta_+) < \infty$ such that local survival occurs when~$\gamma > \gamma_+$ and~$\beta_2 > \beta_+$.
 In contrast, it follows from Theorem~\ref{th:two-stage} that there exists~$\gamma_- > 0$ such that local extinction occurs when~$\gamma < \gamma_-$ regardless of the value of the symptomatic infection rate.
 Using again monotonicity with respect to the parameter~$\gamma$ when~$\beta_1 < \beta_2$, we deduce the existence of a critical value~$\gamma_- \leq \gamma_c \leq \gamma_+$ such that
\begin{equation}
\label{eq:gammac}
\begin{array}{rcl}
\hbox{a.} \ \ \beta_1 = 0 \quad \hbox{and} \quad \gamma > \gamma_c & \Longrightarrow & \hbox{local survival for~$\beta_2 < \infty$ large}, \vspace*{4pt} \\
\hbox{b.} \ \ \beta_1 = 0 \quad \hbox{and} \quad \gamma < \gamma_c & \Longrightarrow & \hbox{local extinction even when~$\beta_2 = \infty$}.  \end{array}
\end{equation}
 This is in sharp contrast with the behavior of the mean-field model in which, provided~$\gamma > 0$, the infection survives for~$\beta_2$ sufficiently large, even when~$\beta_1 = 0$. \\ \\
{\bf Asymptomatic = supercritical and symptomatic = subcritical} \vspace*{5pt} \\
 Recall that, in this case, standard coupling arguments fail to prove monotonicity of the infected region with respect to each of the three parameters.
 However, using some obvious symmetry and exchanging the role of the two infection rates also accounting for the reverse effect of the parameter~$\gamma$, the proof of Theorem~\ref{th:top-left} easily extends to show that
\begin{theorem}
\label{th:bottom-right}
 Starting with a positive density of~1s and~2s,
\begin{equation}
\label{eq:bottom-right}
\begin{array}{rcl}
\beta_1 > \beta_c \quad \hbox{and} \quad \beta_2 < \beta_c & \Longrightarrow & \left\{\begin{array}{l}
\hbox{a. local survival for all~$\gamma > 0$ small}, \vspace*{4pt} \\
\hbox{b. local extinction for all~$\gamma < \infty$ large}. \end{array} \right. \end{array}
\end{equation}
\end{theorem}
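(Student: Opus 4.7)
The plan is to mirror the structure of the proof of Theorem~\ref{th:top-left}, swapping the roles of $\beta_1$ and $\beta_2$ and reversing the monotone direction of $\gamma$: now small $\gamma$ brings the process close to the supercritical CP$(\beta_1)$, while large $\gamma$ drives it to the subcritical CP$(\beta_2)$.

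For part~(a), observe that $\gamma = 0$ disables the $1 \to 2$ transition, so $\xi$ becomes the basic contact process at rate $\beta_1 > \beta_c$, which survives locally by~\eqref{eq:survival}. By Bezuidenhout--Grimmett, this supercritical contact process satisfies a block condition in some space-time box $[-L, L]^d \times [0, T]$: the event that appropriate translates of a seed region $A$ contain a sufficient density of 1s at time $T$ has probability $\geq 1 - \epsilon$ for any prescribed $\epsilon > 0$. Build a shared graphical representation for $\xi$ and CP$(\beta_1)$ using common $\beta_1$-arrows and recovery marks, plus independent rate-$\gamma$ Poisson clocks for the $1 \to 2$ transitions. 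On the event that no such clock rings inside the box, which has probability $\geq \exp(-\gamma\, C(L, T))$, the two processes agree; for $\gamma$ small this event is nearly certain, so the block condition survives for $\xi$ and the standard comparison with a supercritical 1-dependent oriented percolation yields local survival.

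For part~(b), when $\gamma \to \infty$ the asymptomatic state becomes instantaneous and $\xi$ degenerates to CP$(\beta_2)$, which dies out locally. For finite but large $\gamma$, the brief sojourns in state~1 produce extra infections at the supercritical rate $\beta_1 > \beta_c$, and because $\beta_1 > \beta_2$ there is no direct monotone coupling with a single contact process. Fix $\epsilon > 0$ small enough that $\beta_2 + \epsilon < \beta_c$, and split the edge arrows into ``symmetric'' arrows at rate $\beta_2/(2d)$ (firing whenever the sender is in either infected state) and ``excess'' arrows at rate $(\beta_1 - \beta_2)/(2d)$ (firing only when the sender is in state~1). The symmetric part alone drives CP$(\beta_2)$, whose subcriticality provides an extinction block condition: in a suitable space-time box, the event ``no infection seed produced in the target translates'' has probability $\geq 1 - \epsilon$. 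An a priori control on the time spent in state~1, of order $1/(1+\gamma)$ per visit, bounds the expected number of excess firings that actually generate infections in the box by $O((\beta_1 - \beta_2)\, L^d\, T / \gamma)$; choosing $\gamma$ large makes this negligible, so with high probability $\xi$ agrees in the box with CP$(\beta_2)$, transferring the extinction block condition and triggering comparison with a subcritical 1-dependent oriented percolation, hence local extinction.

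The main obstacle is the a priori control of state-1 occupation in part~(b): bounding the excess firings requires bounding $P(\xi_s(x) = 1)$, which is itself a function of the infection density one is trying to control. The natural remedy is an inductive block construction in time, using the extinction block condition from one time slab to bound the density entering the next slab, so that at each scale the state-1 fraction of infected sites stays close to its equilibrium value $1/(1+\gamma)$. The small probability of excess infections per block is then absorbed into the slack $\beta_c - (\beta_2 + \epsilon)$, and iterating the block condition produces a subcritical comparison that yields local extinction.
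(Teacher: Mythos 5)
Your part~(a) follows the paper exactly: couple the epidemic with the contact process $\zeta^0_t$ driven by the type-1 arrows and recovery crosses, run the Bezuidenhout--Grimmett block construction for the supercritical CP$(\beta_1)$, intersect with the event that no dots fall in the block (probability $e^{-\gamma S}$, close to one for $\gamma$ small), and conclude via supercritical oriented percolation. Nothing to add there.

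For part~(b), the structure you propose (transfer an extinction block condition from the subcritical CP$(\beta_2)$ to the epidemic by showing that the 1s contribute negligibly when $\gamma$ is large) is also the paper's strategy, and your symmetric/excess arrow decomposition is a legitimate variant of the paper's type-1/type-2 split. The genuine flaw is in your last paragraph: the circularity you diagnose does not exist, so the inductive block-in-time remedy is both unnecessary and, as stated, not a proof. You claim that bounding the excess firings ``requires bounding $P(\xi_s(x)=1)$,'' and that this quantity depends on the very infection density you are trying to control. In fact, the expected number of excess firings in a space-time block can be bounded purely in terms of the graphical representation, with no reference to the law of $\xi_s$. Every sojourn of a site $x$ in state~1 is initiated by some infection arrow landing at $x$; the number of such landing arrows in the block is a process-independent Poisson random variable (rate $(\beta_1+\beta_2)$ per site per unit time), and conditional on an arrow landing at $(x,t)$, the memoryless property gives that an excess arrow starts at $x$ before the next dot or cross at $x$ with probability at most $(\beta_1-\beta_2)/(\beta_1-\beta_2+1+\gamma)$. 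This is exactly the content of the paper's estimates~\eqref{eq:epidemic-7}--\eqref{eq:epidemic-8}: first condition on the (Poisson) event $H_{m,n}$ that at most $2H$ arrows land in $A_{m,n}$, then union-bound over these at most $2H$ landing points the probability that a dot fails to arrive before the next outgoing arrow, obtaining a bound that tends to zero as $\gamma\to\infty$. Since the resulting good event $G_{m,n}^\infty$ is measurable with respect to the graphical representation in $A_{m,n}$ alone, it composes with the CP$(\beta_2)$ extinction block event from~\eqref{eq:extinction-3} to give a $1$-dependent subcritical oriented percolation, and local extinction follows. You should replace your final paragraph with this graphical, Poisson-counting argument; once you do, your proof coincides with the paper's.
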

 We now assume that~$\beta_2 = 0$, which happens for instance when all the symptomatic individuals are placed in quarantine.
 Assuming in addition that an infected individual who is asymptomatic cannot recover but turns instead into a symptomatic individual results in the three-state interacting particle system with transition rates
\begin{equation}
\label{eq:fire-rate}
\begin{array}{rclcrclcrcl}
  0 \to 1 & \hbox{at rate} & \beta_1 f_1 (x, \xi), & \quad & 1 \to 2 & \hbox{at rate} & 1 + \gamma, & \quad & 2 \to 0 & \hbox{at rate} & 1. \end{array}
\end{equation}
 This stochastic process is a time change of the standard forest fire model in which time is accelerated by the factor~$1 + \gamma$.
\begin{figure}[t!]
\centering
\scalebox{0.40}{\input{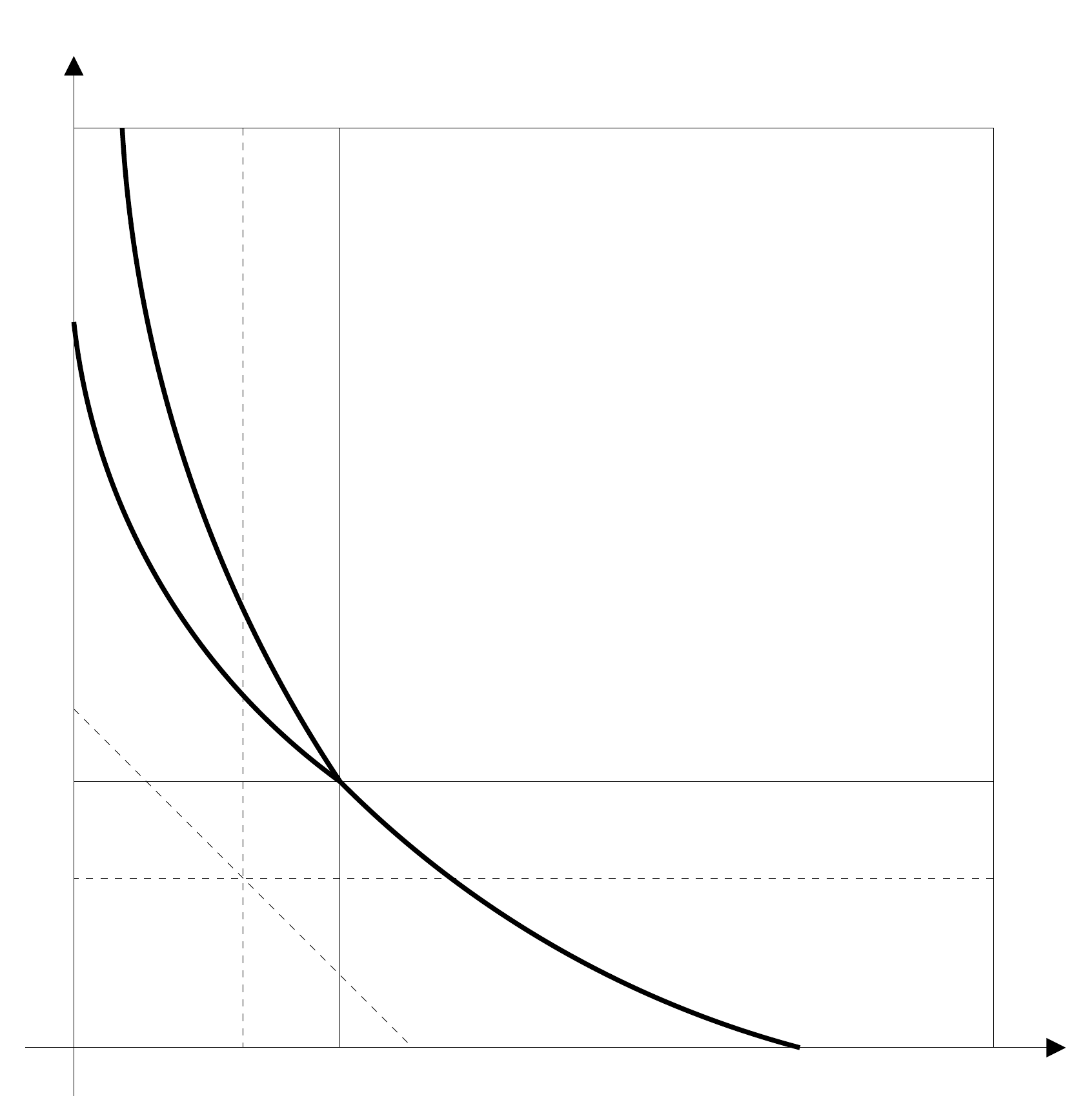_t}}
\caption{\upshape{
 Phase structure of the contact process with an asymptomatic state and its mean-field model.
 The thick black lines show the phase transitions of the interacting particle system for various values of~$\gamma$.
 The solid vertical and horizontal lines show the phase transitions in the limiting cases~$\gamma = 0$ and~$\gamma = \infty$.
 The dashed lines are the straight lines with equation~$\beta_1 + \gamma \beta_2 = 1 + \gamma$ showing the corresponding phase transitions for the mean-field model.}}
\label{fig:PS}
\end{figure}
 In this context, the states are interpreted as
 $$ 0 = \hbox{live tree}, \qquad 1 = \hbox{burning tree}, \qquad \hbox{and} \qquad 2 = \hbox{burnt tree}. $$
 The transition rates~\eqref{eq:fire-rate} indicate that the fire spread from burning trees to nearby live trees, trees keep burning for an exponential amount of time, and burnt trees are eventually replaced by a new live tree.
 Durrett and Neuhauser~\cite{Durrett_Neuhauser_1991} proved that, in~$d = 2$, the fire keeps burning with positive probability when the rate at which burning trees emit sparks exceeds some critical value~$\alpha_c$.
 In addition, because the transition~$1 \to 0$ occurring at rate one in our process is replaced by the transition~$1 \to 2$ occurring at the same rate in the forest fire model, we expect that our epidemic model has more~1s and less~2s than the forest fire model which, accounting for the time change and using~\cite{Durrett_Neuhauser_1991}, suggests local survival when~$\beta_1 > (1 + \gamma) \,\alpha_c$.
 Though we believe that this result is true, standard coupling arguments again fail to compare both processes.
 However, using a construction inspired from Cox and Durrett~\cite{Cox_Durrett_1988}, we can compare the set of sites that ever get infected with the set of wet sites in a certain site percolation process to prove that
\begin{theorem}
\label{th:forest-fire}
 Assuming that~$d > 1$ and starting the process with a single~1 at the origin in an otherwise healthy population,
\begin{equation}
\label{eq:forest-fire}
\begin{array}{rcl} \gamma < \infty \quad \hbox{and} \quad \beta_2 = 0 & \Longrightarrow & \hbox{global survival for all} \ \beta_1 < \infty \ \hbox{large}. \end{array}
\end{equation}
\end{theorem}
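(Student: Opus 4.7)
The plan is to carry out a block construction that compares the set of sites ever infected with the set of wet sites in a supercritical Bernoulli site percolation on~$\Z^d$. Because infection spreads only by nearest-neighbor contact and new~$1$s can only be created from existing~$1$s, if the process were empty from some finite time~$T$ onward, no site could be infected after~$T$, so the ever-infected set would be finite. Hence global survival is equivalent to the ever-infected set being infinite with positive probability. Since~$d > 1$, the critical probability for Bernoulli site percolation on~$\Z^d$ is strictly less than~$1$, so it suffices to produce, for~$\beta_1$ sufficiently large, a block-level good event whose probability can be made as close to~$1$ as we please.

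The first step is a single-step ignition estimate. Given an isolated~$1$ at site~$x$ with healthy neighbors, the probability that a chosen neighbor~$y$ becomes~$1$ before~$x$ leaves state~$1$ is
$$ \frac{\beta_1/(2d)}{\beta_1/(2d) + 1 + \gamma} = \frac{\beta_1}{\beta_1 + 2d (1 + \gamma)}, $$
which tends to~$1$ as~$\beta_1 \to \infty$. I would then fix a large integer~$L$ and, inside a box~$B_L$ of side~$L$, prescribe a self-avoiding nearest-neighbor path of length of order~$L$ from a designated entry point to each of the~$2d$ face-exit points, and declare~$B_L$ to be good if the infection, once ignited at the entry point, travels along each of these paths within a short time window, ending with fresh~$1$s at all exit points. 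Since the ignitions along successive edges of the path can be read from disjoint portions of the graphical construction, chaining the single-step estimate shows that the probability of being good tends to~$1$ as~$\beta_1 \to \infty$.

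Finally, I would partition~$\Z^d$ into disjoint translates of~$B_L$, indexed by a renormalized lattice isomorphic to~$\Z^d$. By the graphical construction, good events on non-adjacent boxes depend on disjoint regions of space-time, yielding a finite-range dependent random field whose marginals can be pushed arbitrarily close to~$1$. The Liggett--Schonmann--Stacey domination theorem then gives that the set of good boxes stochastically dominates a Bernoulli site percolation of density as close to~$1$ as desired, and since~$d > 1$ the origin-block lies in an infinite cluster of good boxes with positive probability, yielding global survival of the epidemic.

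The main obstacle will be the handling of the symptomatic sites. Once a site has entered state~$2$ it blocks further propagation (because~$\beta_2 = 0$) until it recovers at rate one, so the prescribed paths must use only fresh~$0$s and the box crossing must be completed on a time scale much shorter than~$1$. This forces one to take~$\beta_1$ very large relative to~$1 + \gamma$ and to the path length. The restriction~$d > 1$ enters twice in a crucial way: it provides the geometric room to route the self-avoiding path around burnt cells inside~$B_L$, and it is what allows the renormalized site percolation to be supercritical with high-density marginals.
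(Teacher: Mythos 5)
Your high-level strategy---comparison with a supercritical Bernoulli site percolation on~$\Z^d$ for~$d > 1$---is the right one, and the single-step ignition estimate~$\beta_1/(\beta_1 + 2d(1+\gamma))$ is exactly the quantity that drives the proof. However, the block construction you propose has a genuine gap: the ``good'' event for the box~$B_L$, namely that the infection traverses prescribed self-avoiding paths within a short time window, is \emph{not} a function of the Poisson clocks inside~$B_L$ alone. It depends on the configuration of~$B_L$ at the random time the infection enters, and in particular on whether the prescribed cells are still fresh~$0$s. You recognize this obstacle and propose to ``route the self-avoiding path around burnt cells,'' but a path that adapts to the random obstacle configuration is no longer prescribed, so both the chaining estimate and the Liggett--Schonmann--Stacey domination step break: goodness of a block ceases to be a local measurable function of the graphical representation. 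This is exactly the difficulty that makes renormalization for blocking systems such as forest fire models delicate, and the appeal to ``geometric room'' in~$d>1$ does not by itself resolve it.

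The paper sidesteps all of this by comparing at the scale of a single site, following Cox and Durrett. For each~$x$, one sets~$\sigma(x)$ exponential with rate~$1+\gamma$ (the time from~$x$'s first infection until it leaves state~$1$) and~$\tau(x,y)$ exponential with rate~$\beta_1/2d$ for each~$y \in N_x$; by the memoryless property these are independent across sites and across the~$2d$ outgoing edges at a given site. Declaring~$x$ open if~$\tau(x,y) < \sigma(x)$ for all~$y \in N_x$ makes sites independently open with probability~$p(\beta_1, \gamma) \to 1$ as~$\beta_1 \to \infty$, and an induction along any directed open path shows that every site in the open cluster of~$0$ is eventually infected: when~$x$ is first infected, each neighbor~$y$ is either already infected or receives a fire arrow from~$x$ before~$x$ leaves state~$1$, so~$y$ gets infected as well. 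There is no freshness condition to track, no time window to control, and no dependent renormalized field to dominate: the comparison is with genuine i.i.d.\ site percolation on~$\Z^d$, and~$p_c(\Z^d) < 1$ for~$d \geq 2$ finishes the proof. Your proposal arrives at the same percolation target but via a heavier and, as written, incomplete route; the one-site coupling is the missing idea that makes the argument close cleanly.
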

 In contrast with Theorem~\ref{th:two-stage}, this result is consistent with the behavior of the mean-field model in which there is survival whenever one infection rate is sufficiently large even if the other infection rate is equal to zero.
 This shows that, while the two infection rates play similar roles in the mean-field model, they have significantly different effects in the spatial model.
 When~$\beta_1 = 0$, the~2s are quickly surrounded and blocked by the nearby~1s they create, therefore, if in addition these~1s take too much time to turn into~2s, the infection dies out.
 In contrast, when~$\beta_2 = 0$, because the~1s produce~1s, even when the~1s quickly turn into~2s, if these~1s spread fast enough they can still escape from the~2s and form traveling waves drifting off to infinity.
 Figure~\ref{fig:PS} shows a picture of the phase structure of the process and its mean-field model summarizing our results. \\
\indent
 The rest of the paper is devoted to the proofs.
 Section~\ref{sec:mean-field} focuses on the mean-field model, using in particular Bendixson--Dulac theorem to give a complete description of the global stability of the fixed points.
 Section~\ref{sec:harris} explains how to construct the process from a graphical representation and couple processes with different parameters to prove monotonicity when~$\beta_1 < \beta_2$.
 The last four sections give the proofs of the four theorems.
 Sections~\ref{sec:epidemic} and~\ref{sec:recovery} use block constructions and perturbation techniques to compare the epidemic model with~$\gamma$ small or large with a supercritical~(Section~\ref{sec:epidemic}) and a subcritical~(Section~\ref{sec:recovery}) contact process, and deduce Theorems~\ref{th:top-left} and~\ref{th:bottom-right}.
 Section~\ref{sec:two-stage} relies on a comparison of the epidemic model with a subcritical Galton--Watson branching process to prove local extinction, as stated in Theorem~\ref{th:two-stage}.
 Finally, the proof of Theorem~\ref{th:forest-fire} is carried out in Section~\ref{sec:forest-fire} by showing that the set of sites that ever get infected dominates stochastically a certain site percolation process on the integer lattice in dimension~$d > 1$.
 

\section{Mean-field analysis}
\label{sec:mean-field}
 This section is devoted to the analysis of the mean-field model~(existence, local stability, and global stability of the fixed points) described in the introduction.
 Recall that, when the population is homogeneously mixing, and letting~$u_i$ be the density of individuals in state~$i$, the mean-field model consists of the system of coupled differential equations
\begin{equation}
\label{eq:mean-field}
\begin{array}{rcl}
  u_1' = F_1 (u_1, u_2) & \n = \n & (\beta_1 u_1 + \beta_2 u_2)(1 - u_1 - u_2) - (1 + \gamma) u_1, \vspace*{4pt} \\
  u_2' = F_2 (u_1, u_2) & \n = \n & \gamma u_1 - u_2. \end{array}
\end{equation}
 Setting the right-hand side of both equations equal to zero, some basic algebra shows that, in addition to the disease-free equilibrium~$p_0 = (0, 0)$ which is always a fixed point, there is also a nontrivial fixed point~$p_{12} = (\bar u_1, \bar u_2)$ whose coordinates are given by
\begin{equation}
\label{eq:fixed-point}
\bar u_1 = \frac{1}{1 + \gamma} - \frac{1}{\beta_1 + \gamma \beta_2} \quad \hbox{and} \quad \bar u_2 = \gamma \bar u_1 = \frac{\gamma}{1 + \gamma} - \frac{\gamma}{\beta_1 + \gamma \beta_2}.
\end{equation}
 This fixed point is biologically relevant in the sense that it belongs to the two-dimensional simplex if and only if~$\beta_1 + \gamma \beta_2 > 1 + \gamma$.
 To study the local stability of the two fixed points, notice first that the Jacobian matrix associated to the system~\eqref{eq:mean-field} is given by
 $$ J (u_1, u_2) = \left(\begin{array}{ccc} \beta_1 (1 - 2u_1) - (\beta_1 + \beta_2) u_2 - (1 + \gamma) && \beta_2 (1 - 2u_2) - (\beta_1 + \beta_2) u_1 \vspace*{2pt} \\ \gamma && -1 \end{array} \right). $$
 Taking~$u_1 = u_2 = 0$, the disease-free equilibrium is locally stable if and only if
 $$ \tr \,(J (0, 0)) = \beta_1 - \gamma - 2 < 0 \quad \hbox{and} \quad \de \,(J (0, 0)) = 1 + \gamma - \beta_1 - \gamma \beta_2 > 0, $$
 which is equivalent to~$\beta_1 + \gamma \beta_2 < 1 + \gamma$.
 To study the local stability of the nontrivial fixed point, we let~$D_1$ and~$D_2$ be the two denominators in the expressions~\eqref{eq:fixed-point} of the fixed points:
 $$ D_1 = 1 + \gamma \quad \hbox{and} \quad D_2 = \beta_1 + \gamma \beta_2. $$
 Multiplying the first coefficient of~$J (\bar u_1, \bar u_2)$ by~$D_1 D_2$, we get
\begin{equation}
\label{eq:first}
\begin{array}{l}
  D_1 D_2 (\beta_1 (1 - 2 \bar u_1) - (\beta_1 + \beta_2) \bar u_2 - (1 + \gamma)) \vspace*{4pt} \\ \hspace*{40pt} =
  D_1 D_2 (\beta_1 - (1 + \gamma) - ((\beta_1 + \gamma \beta_2) + (1 + \gamma) \beta_1) \bar u_1) \vspace*{4pt} \\ \hspace*{40pt} =
  D_1 D_2 (\beta_1 - D_1 - (D_2 + D_1 \beta_1)(1 / D_1 - 1 / D_2)) \vspace*{4pt} \\ \hspace*{40pt} =
  D_1 D_2 (\beta_1 - D_1) - (D_2 + D_1 \beta_1)(D_2 - D_1), \end{array}
\end{equation}
 while multiplying the second coefficient of the matrix by~$D_1 D_2$, we get
\begin{equation}
\label{eq:second}
\begin{array}{l}
  D_1 D_2 (\beta_2 (1 - 2 \bar u_2) - (\beta_1 + \beta_2) \bar u_1) \vspace*{4pt} \\ \hspace*{40pt} =
  D_1 D_2 (\beta_2 - ((\beta_1 + \gamma \beta_2) + (1 + \gamma) \beta_2) \bar u_1) \vspace*{4pt} \\ \hspace*{40pt} =
  D_1 D_2 (\beta_2 - (D_2 + D_1 \beta_2)(1 / D_1 - 1 / D_2)) \vspace*{4pt} \\ \hspace*{40pt} =
  D_1 D_2 \beta_2 - (D_2 + D_1 \beta_2)(D_2 - D_1). \end{array}
\end{equation}
 Using~\eqref{eq:first}, we deduce that the trace is negative:
 $$ \begin{array}{rcl}
 D_1 D_2  \,\tr \,(J (\bar u_1, \bar u_2)) & \n = \n & D_1 D_2 (\beta_1 - D_1) - (D_2 + D_1 \beta_1)(D_2 - D_1) - D_1 D_2 \vspace*{4pt} \\
                                           & \n = \n & - D_1^2 D_2 - D_2^2 + D_1^2 \beta_1 = - D_2^2 - D_1^2 \gamma \beta_2 < 0, \end{array} $$
 which shows that the Jacobian matrix has at least one negative eigenvalue.
 Similarly, using both expressions~\eqref{eq:first} and~\eqref{eq:second}, we deduce that the determinant is positive:
 $$ \begin{array}{l}
 D_1 D_2  \,\de \,(J (\bar u_1, \bar u_2)) \vspace*{4pt} \\ \hspace*{20pt} =
 - D_1 D_2 (\beta_1 - D_1) + (D_2 + D_1 \beta_1)(D_2 - D_1) - \gamma D_1 D_2 \beta_2 + \gamma (D_2 + D_1 \beta_2)(D_2 - D_1) \vspace*{4pt} \\ \hspace*{20pt} =
 - D_1 D_2 (\beta_1 + \gamma \beta_2 - D_1) + ((1 + \gamma) D_2 + D_1 (\beta_1 + \gamma \beta_2))(D_2 - D_1) \vspace*{4pt} \\ \hspace*{20pt} =
 - D_1 D_2 (D_2 - D_1) + (D_1 D_2 + D_1 D_2)(D_2 - D_1) =
 D_1 D_2 (D_2 - D_1) > 0, \end{array} $$
 when~$D_2 = \beta_1 + \gamma \beta_2 > 1 + \gamma = D_1$, which is exactly the condition for the nontrivial fixed point to belong to the two-dimensional simplex.
 This shows that both eigenvalues have the same sign, therefore both eigenvalues are negative and the interior fixed point~$p_{12}$ is locally stable.
\begin{figure}[t!]
\centering
\scalebox{0.40}{\input{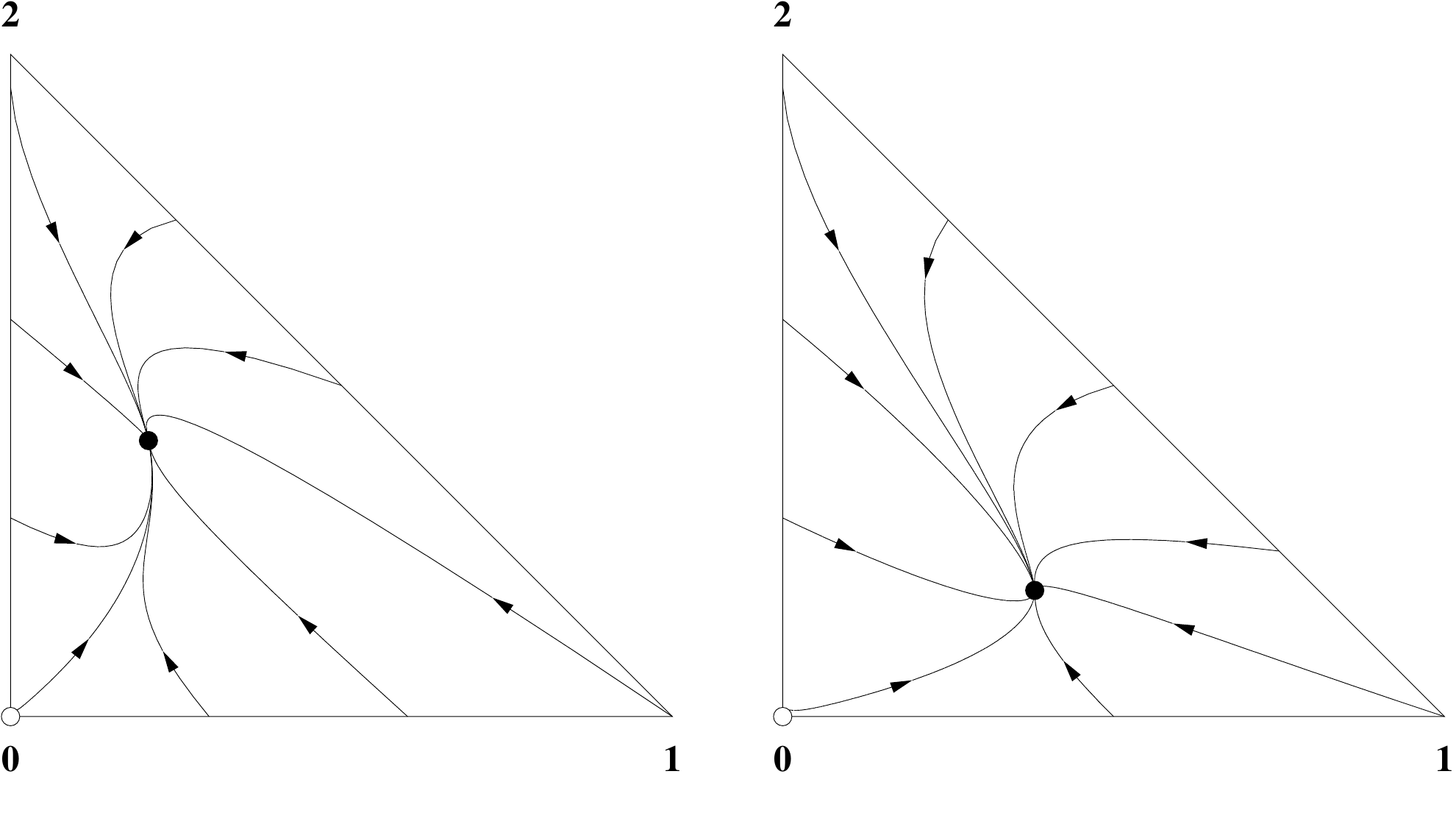_t}}
\caption{\upshape{
 Solution curves of the mean-field model for different values of~$\gamma$.}}
\label{fig:MF}
\end{figure}
 Finally, to prove that the local stability of the fixed points under the appropriate conditions also implies global stability, the last step is to apply~Bendixson--Dulac theorem to exclude the existence of periodic orbits.
 Using the Dulac function~$\phi = 1 / (u_1 u_2)$, we obtain
 $$ \begin{array}{rcl}
    \nabla \cdot (\phi F_1, \phi F_2) & \n = \n &
    \displaystyle \frac{\partial}{\partial u_1} \bigg(\bigg(\frac{\beta_1}{u_2} + \frac{\beta_2}{u_1} \bigg) (1 - u_1 - u_2) - \frac{1 + \gamma}{u_2} \bigg) +
    \displaystyle \frac{\partial}{\partial u_2} \bigg(\frac{\gamma}{u_2} - \frac{1}{u_1} \bigg) \vspace*{8pt} \\ & \n = \n &
    \displaystyle - \bigg(\frac{\beta_2 (1 - u_1 - u_2)}{u_1^2} \bigg) - \bigg(\frac{\beta_1}{u_2} + \frac{\beta_2}{u_1} \bigg) - \frac{\gamma}{u_2^2} < 0 \end{array} $$
 for all~$(u_1, u_2)$ in the interior of the simplex.
 In conclusion,
 $$ \begin{array}{rcl} \hbox{the infection survives in the sense~\eqref{eq:mean-field-survival}} & \Longleftrightarrow & \beta_1 + \gamma \beta_2 > 1 + \gamma. \end{array} $$
 In contrast, when the inequality is flipped, the trivial fixed point~$p_0$ is the only fixed point in the simplex and, starting from any initial condition in the simplex, the system converges to~$p_0$.
 Figure~\ref{fig:MF} shows a picture of the solution curves when there is survival, while Figure~\ref{fig:PS} shows a visualization of the parameter region~$\beta_1 + \gamma \beta_2 > 1 + \gamma$~(above the oblique dashed line).
 

\section{Graphical representation and monotonicity}
\label{sec:harris}
 The first step to study the spatial model is to use a key idea due to Harris~\cite{Harris_1978} that consists in constructing the process graphically from a collection of independent Poisson processes/exponential clocks.
 The concept of graphical representation not only shows that the process on the infinite lattice is well-defined~(even though the time to the first/next update does not exist) but also will be used later to couple processes with different parameters in order to prove monotonicity results, and apply block constructions/perturbation techniques to compare the process properly rescaled in space and time with oriented site percolation.
 In the case of the contact process with an asymptomatic state, the most natural approach to construct the process is to use four collections of exponential clocks corresponding to infection from an asymptomatic individual, infection from a symptomatic individual, transition from the asymptomatic state to the symptomatic state, and recovery.
\begin{figure}[t!]
\centering
\scalebox{0.35}{\input{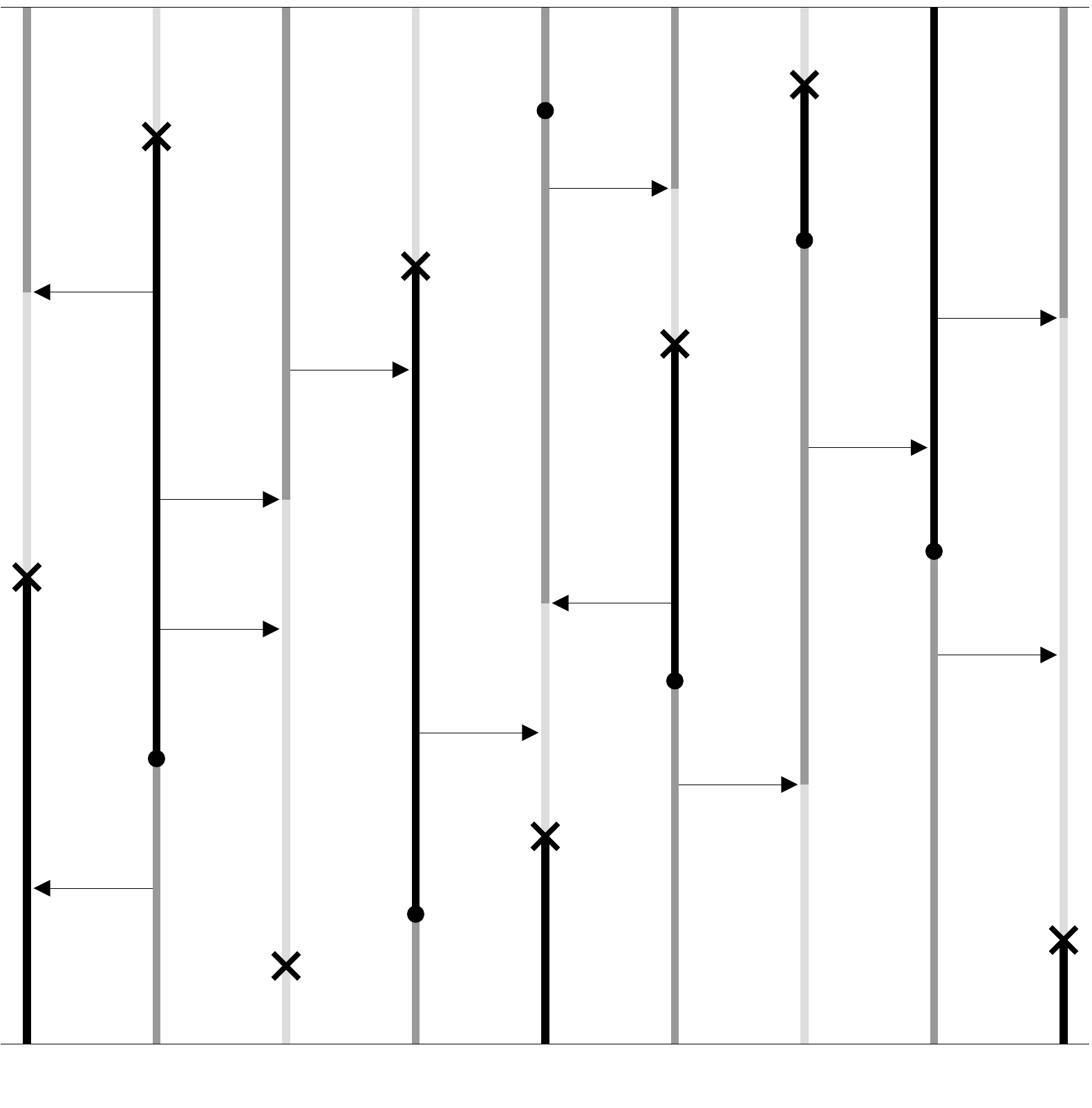_t}}
\caption{\upshape{
 Graphical representation of the contact process with an asymptomatic state.
 The gray lines represent the infected asymptomatic individuals and the black lines the infected symptomatic individuals.}}
\label{fig:GR}
\end{figure}
 More precisely, we think of the integer lattice as the vertex set of a directed graph in which there is a directed edge~$\vec{xy}$ if and only if vertices~$x$ and~$y$ are nearest neighbors, and attach the following exponential clocks to the directed edges and vertices as follows.
\begin{itemize}
\item {\bf Infection from asymptomatic}.
           Place an exponential clock with rate~$\beta_1 / 2d$ along each directed edge~$\vec{xy}$.
           Each time the clock rings, say at time~$t$, draw an arrow~$(x, t) \overset{\hbox{\tiny 1}}{\longrightarrow} (y, t)$ to indicate that if the tail of the arrow is occupied by a~1 and the head of the arrow is healthy then the head of the arrow becomes of type~1. \vspace*{4pt}
\item {\bf Infection from symptomatic}.
           Place an exponential clock with rate~$\beta_2 / 2d$ along each directed edge~$\vec{xy}$.
           Each time the clock rings, say at time~$t$, draw an arrow~$(x, t) \overset{\hbox{\tiny 2}}{\longrightarrow} (y, t)$ to indicate that if the tail of the arrow is occupied by a~2 and the head of the arrow is healthy then the head of the arrow becomes of type~1. \vspace*{4pt}
\item {\bf Transition to symptomatic}.
           Place an exponential clock with rate~$\gamma$ at each vertex~$x$.
           Each time the clock rings, say at time~$t$, put a dot~$\bullet$ at point~$(x, t)$ to indicate that if site~$x$ is asymptomatic/occupied by a~1 then it becomes symptomatic/occupied by a~2. \vspace*{4pt}
\item {\bf Recovery}.
           Place an exponential clock with rate one at each vertex~$x$.
           Each time the clock rings, say at time~$t$, put a cross~$\times$ at point~$(x, t)$ to indicate that if site~$x$ is infected~(asymptomatic or symptomatic) then it becomes healthy. \vspace*{4pt}
\end{itemize}
 Harris~\cite{Harris_1972, Harris_1978} proved for a large class of interacting particle systems including our contact process with an asymptomatic state that the process is well-defined and can indeed be constructed using the collection of exponential clocks and algorithm above, and we refer to Figure~\ref{fig:GR} for a picture of the graphical representation and an illustration of how to construct the process. \\
\indent
 We now use different variations of the graphical representation together with the superposition property for Poisson processes to couple/construct jointly processes with different parameters and prove monotonicity with respect to each of the parameters when~$\beta_1 < \beta_2$.
 Let~$\Xi (\beta_1, \beta_2, \gamma)$ be the process with infection rate from asymptomatic individuals~$\beta_1$, infection rate from symptomatic individuals~$\beta_2$, and transition rate to the symptomatic state~$\gamma$. \\ \\
\noindent
{\bf Monotonicity in~$\beta_1$}.
 Fix~$\beta_1 < \beta_1' < \beta_2$, let
 $$ \xi_t = \Xi (\beta_1, \beta_2, \gamma) \quad \hbox{and} \quad \xi_t' = \Xi (\beta_1', \beta_2, \gamma), $$
 and construct the two processes as follows.
\begin{itemize}
 \item Place arrows~$x \longrightarrow y$ along each directed edge~$\vec{xy}$ at rate~$\beta_1 / 2d$.
       These arrows have the same effect as the~1-arrows and the~2-arrows defined above on both processes. \vspace*{4pt}
 \item Place arrows~$x \overset{\hbox{\tiny 1'}}{\longrightarrow} y$ along each directed edge~$\vec{xy}$ at rate~$(\beta_1' - \beta_1) / 2d$.
       These arrows have the same effect as the~1-arrows defined above on the process~$\xi_t'$ only and the same effect as the~2-arrows defined above on both processes. \vspace*{4pt}
 \item Place arrows~$x \overset{\hbox{\tiny 2}}{\longrightarrow} y$ along each directed edge~$\vec{xy}$ at rate~$(\beta_2 - \beta_1') /2d$.
       These arrows have the same effect as the~2-arrows defined above on both processes. \vspace*{4pt}
 \item Place black dots~$\bullet$ at each site~$x$ at rate~$\gamma$.
       These black dots have the same effect as the dots defined above on both processes. \vspace*{4pt}
 \item Place crosses~$\times$ at each site~$x$ at rate one.
       These crosses have the same effect as the crosses defined above on both processes.
\end{itemize}
 Starting both processes from the same configuration, it can be proved that all the possible pairs of states generated by the coupling~$(\xi_t, \xi_t')$ are~$S = \{00, 01, 02, 11, 12, 22 \}$, i.e., the set~$S$ is closed under the dynamics.
\begin{table}[h!]
\begin{tabular}{c|p{1em}p{0.4em}p{0.4em}p{0.4em}p{0.4em}p{0.4em}p{1em}|p{1em}p{0.4em}p{0.4em}p{0.4em}p{0.4em}p{0.4em}p{1em}|p{1em}p{0.4em}p{0.4em}p{0.4em}p{0.4em}p{0.4em}p{1em}|c|c}
    & $\longrightarrow$ & 00 & 01 & 02 & 11 & 12 & 22 & $\overset{\hbox{\tiny 1'}}{\longrightarrow}$ & 00 & 01 & 02 & 11 & 12 & 22 & $\overset{\hbox{\tiny 2}}{\longrightarrow}$ & 00 & 01 & 02 & 11 & 12 & 22 & $\bullet$ & $\times $ \\ \hline
 00 && 00 & 01 & 02 & 11 & 12 & 22 && 00 & 01 & 02 & 11 & 12 & 22 && 00 & 01 & 02 & 11 & 12 & 22 & 00 & 00 \\
 01 && 01 & 01 & 02 & 11 & 12 & 22 && 01 & 01 & 02 & 11 & 12 & 22 && 00 & 01 & 02 & 11 & 12 & 22 & 02 & 00 \\
 02 && 01 & 01 & 02 & 11 & 12 & 22 && 01 & 01 & 02 & 11 & 12 & 22 && 01 & 01 & 02 & 11 & 12 & 22 & 02 & 00 \\
 11 && 11 & 11 & 12 & 11 & 12 & 22 && 01 & 01 & 02 & 11 & 12 & 22 && 00 & 01 & 02 & 11 & 12 & 22 & 22 & 00 \\
 12 && 11 & 11 & 12 & 11 & 12 & 22 && 01 & 01 & 02 & 11 & 12 & 22 && 01 & 01 & 02 & 11 & 12 & 22 & 22 & 00 \\
 22 && 11 & 11 & 12 & 11 & 12 & 22 && 11 & 11 & 12 & 11 & 12 & 22 && 11 & 11 & 12 & 11 & 12 & 22 & 22 & 00
\end{tabular} \vspace*{10pt}
\caption{\upshape{Coupling showing the monotonicity with respect to~$\beta_1$ when~$\beta_1 < \beta_2$}}
\label{tab:beta1}
\end{table}
 See~Table~\ref{tab:beta1} where the pairs of states in the first column are the possible pairs of states at site~$x$ before a potential update, the pairs of states in the first row are the possible pairs of states at~$y$ before a potential update, and the pairs of states in the table are the pairs of states resulting from the dynamics.
 This implies that the process~$\xi_t'$ has more infected sites than the process~$\xi_t$ therefore the set of infected sites is nondecreasing with respect to~$\beta_1$. \\ \\
\noindent
{\bf Monotonicity in~$\beta_2$}.
 Fix~$\beta_1 < \beta_2 < \beta_2'$, let
 $$ \xi_t = \Xi (\beta_1, \beta_2, \gamma) \quad \hbox{and} \quad \xi_t' = \Xi (\beta_1, \beta_2', \gamma), $$
 and construct the two processes as follows.
\begin{itemize}
 \item Place arrows~$x \longrightarrow y$ along each directed edge~$\vec{xy}$ at rate~$\beta_1 / 2d$.
       These arrows have the same effect as the~1-arrows and the~2-arrows defined above on both processes. \vspace*{4pt}
 \item Place arrows~$x \overset{\hbox{\tiny 2}}{\longrightarrow} y$ along each directed edge~$\vec{xy}$ at rate~$(\beta_2 - \beta_1) / 2d$.
       These arrows have the same effect as the~2-arrows defined above on both processes. \vspace*{4pt}
 \item Place arrows~$x \overset{\hbox{\tiny 2'}}{\longrightarrow} y$ along each directed edge~$\vec{xy}$ at rate~$(\beta_2' - \beta_2) / 2d$.
       These arrows have the same effect as the~2-arrows defined above on the process~$\xi_t'$ only. \vspace*{4pt}
 \item Place black dots~$\bullet$ at each site~$x$ at rate~$\gamma$.
       These black dots have the same effect as the dots defined above on both processes. \vspace*{4pt}
 \item Place crosses~$\times$ at each site~$x$ at rate one.
       These crosses have the same effect as the crosses defined above on both processes.
\end{itemize}
\begin{table}[h!]
\begin{tabular}{c|p{1em}p{0.4em}p{0.4em}p{0.4em}p{0.4em}p{0.4em}p{1em}|p{1em}p{0.4em}p{0.4em}p{0.4em}p{0.4em}p{0.4em}p{1em}|p{1em}p{0.4em}p{0.4em}p{0.4em}p{0.4em}p{0.4em}p{1em}|c|c}
    & $\longrightarrow$ & 00 & 01 & 02 & 11 & 12 & 22 & $\overset{\hbox{\tiny 2}}{\longrightarrow}$ & 00 & 01 & 02 & 11 & 12 & 22 & $\overset{\hbox{\tiny 2'}}{\longrightarrow}$ & 00 & 01 & 02 & 11 & 12 & 22 & $\bullet$ & $\times $ \\ \hline
 00 && 00 & 01 & 02 & 11 & 12 & 22 && 00 & 01 & 02 & 11 & 12 & 22 && 00 & 01 & 02 & 11 & 12 & 22 & 00 & 00 \\
 01 && 01 & 01 & 02 & 11 & 12 & 22 && 01 & 01 & 02 & 11 & 12 & 22 && 00 & 01 & 02 & 11 & 12 & 22 & 02 & 00 \\
 02 && 01 & 01 & 02 & 11 & 12 & 22 && 01 & 01 & 02 & 11 & 12 & 22 && 01 & 01 & 02 & 11 & 12 & 22 & 02 & 00 \\
 11 && 11 & 11 & 12 & 11 & 12 & 22 && 01 & 01 & 02 & 11 & 12 & 22 && 00 & 01 & 02 & 11 & 12 & 22 & 22 & 00 \\
 12 && 11 & 11 & 12 & 11 & 12 & 22 && 01 & 01 & 02 & 11 & 12 & 22 && 01 & 01 & 02 & 11 & 12 & 22 & 22 & 00 \\
 22 && 11 & 11 & 12 & 11 & 12 & 22 && 11 & 11 & 12 & 11 & 12 & 22 && 11 & 11 & 12 & 11 & 12 & 22 & 22 & 00
\end{tabular} \vspace*{10pt}
\caption{\upshape{Coupling showing the monotonicity with respect to~$\beta_2$ when~$\beta_1 < \beta_2$}}
\label{tab:beta2}
\end{table}
 As previously, Table~\ref{tab:beta2} shows that the set~$S$ is closed under the dynamics of the coupling~$(\xi_t, \xi_t')$, which implies monotonicity with respect to the infection rate~$\beta_2$. \\ \\
\noindent
{\bf Monotonicity in~$\gamma$}.
 Fix~$\beta_1 < \beta_2$ and~$\gamma < \gamma'$, let
 $$ \xi_t = \Xi (\beta_1, \beta_2, \gamma) \quad \hbox{and} \quad \xi_t' = \Xi (\beta_1, \beta_2, \gamma'), $$
 and construct the two processes as follows.
\begin{itemize}
 \item Place arrows~$x \longrightarrow y$ along each directed edge~$\vec{xy}$ at rate~$\beta_1 / 2d$.
       These arrows have the same effect as the~1-arrows and the~2-arrows defined above on both processes. \vspace*{4pt}
 \item Place arrows~$x \overset{\hbox{\tiny 2}}{\longrightarrow} y$ along each directed edge~$\vec{xy}$ at rate~$(\beta_2 - \beta_1) / 2d$.
       These arrows have the same effect as the~2-arrows defined above on both processes. \vspace*{4pt}
 \item Place black dots~$\bullet$ at each site~$x$ at rate~$\gamma$.
       These black dots have the same effect as the dots defined above on both processes. \vspace*{4pt}
 \item Place white dots~$\circ$ at each site~$x$ at rate~$\gamma' - \gamma$.
       These white dots have the same effect as the dots defined above on the process~$\xi_t'$ only. \vspace*{4pt}
 \item Place crosses~$\times$ at each site~$x$ at rate one.
       These crosses have the same effect as the crosses defined above on both processes.
\end{itemize} 
\begin{table}[h!]
\begin{tabular}{c|p{1em}p{0.4em}p{0.4em}p{0.4em}p{0.4em}p{0.4em}p{1em}|p{1em}p{0.4em}p{0.4em}p{0.4em}p{0.4em}p{0.4em}p{1em}|c|c|c}
    & $\longrightarrow$ & 00 & 01 & 02 & 11 & 12 & 22 & $\overset{\hbox{\tiny 2}}{\longrightarrow}$ & 00 & 01 & 02 & 11 & 12 & 22 & $\bullet$ & $\circ$ & $\times$ \\ \hline
 00 && 00 & 01 & 02 & 11 & 12 & 22 && 00 & 01 & 02 & 11 & 12 & 22 & 00 & 00 & 00 \\
 01 && 01 & 01 & 02 & 11 & 12 & 22 && 00 & 01 & 02 & 11 & 12 & 22 & 02 & 02 & 00 \\
 02 && 01 & 01 & 02 & 11 & 12 & 22 && 01 & 01 & 02 & 11 & 12 & 22 & 02 & 02 & 00 \\
 11 && 11 & 11 & 12 & 11 & 12 & 22 && 00 & 01 & 02 & 11 & 12 & 22 & 22 & 12 & 00 \\
 12 && 11 & 11 & 12 & 11 & 12 & 22 && 01 & 01 & 02 & 11 & 12 & 22 & 22 & 12 & 00 \\
 22 && 11 & 11 & 12 & 11 & 12 & 22 && 11 & 11 & 12 & 11 & 12 & 22 & 22 & 22 & 00 \\
\end{tabular} \vspace*{10pt}
\caption{\upshape{Coupling showing the monotonicity with respect to~$\gamma$ when~$\beta_1 < \beta_2$}}
\label{tab:gamma}
\end{table}
 Table~\ref{tab:gamma} confirms that the set~$S$ is again closed under the dynamics of the coupling~$(\xi_t, \xi_t')$ therefore the set of infected sites is nondecreasing with respect to~$\gamma$. \\
\indent
 Using similar joint graphical representations in the case~$\beta_1 > \beta_2$ and starting both processes from the same configuration first create the same pairs of states as before, including~12.
 However, because the asymptomatic individuals are more infectious, the graphical representation now contains type~1 arrows through which only the~1s can infect nearby healthy individuals.
 In particular, the presence of a type~1 arrow going from a site in state~12 to a site in state~00 creates a site in state~10, thus showing that a standard coupling fails to prove monotonicity when~$\beta_1 > \beta_2$.


\section{Proof of Theorems~\ref{th:top-left}.a and~\ref{th:bottom-right}.a (survival)}
\label{sec:epidemic}
 This section is devoted to the survival parts of Theorems~\ref{th:top-left} and~\ref{th:bottom-right}.
 The intuition behind these results is simple.
 When~$\gamma = 0$, the epidemic model reduces to a contact process with parameter~$\beta_1$ so the infection survives locally when~$\beta_1 > \beta_c$.
 Similarly, when~$\gamma = \infty$, the epidemic model reduces to a contact process with parameter~$\beta_2$ so the infection survives locally when~$\beta_2 > \beta_c$.
 To extend these results to~$\gamma$ small/large, we apply a standard perturbation argument to the block construction introduced by Bezuidenhout and Grimmett~\cite{Bezuidenhout_Grimmett_1990} to study the supercritical contact process. \\ \\
{\bf Proof of Theorem~\ref{th:bottom-right}.a}.
 Let~$\beta_1 > \beta_c$ and think of the contact process
 $$ \zeta_t^0 : \Z^d \longrightarrow \{0, 1 \} \quad \hbox{where} \quad 0 = \hbox{healthy} \quad \hbox{and} \quad 1 = \hbox{infected} $$
 with parameter~$\beta_1$ as being constructed from the graphical representation depicted in Figure~\ref{fig:GR} by using the type~1 infection arrows and the recovery crosses~(but ignoring the type~2 arrows and the dots).
 Note that, denoting by~$P_{\gamma}$ the probability associated to the epidemic model with parameter~$\gamma$, the probability associated to the contact process is simply~$P_0$.
 Now let
 $$ \Lat_1 = \{(m, n) \in \Z \times \Z_+ : m + n \ \hbox{is even} \}, $$
 which we turn into a directed graph~$\vec \Lat_1$ by placing arrows
 $$ \begin{array}{rcl} (m, n) \to (m', n') & \Longleftrightarrow & |m - m'| = 1 \ \ \hbox{and} \ \ n' = n + 1. \end{array} $$
 The idea of the block construction is to compare the contact process properly rescaled in space and time with oriented site percolation on this directed graph.
 Having four integers~$J, K, L, T$ playing the role of space and time scales, we let~$D = [\![-J, J ]\!]^d \times \{0 \}$ and
 $$ \begin{array}{rcl}
      A_{m, n} & \n = \n & (2mL e_1, n KT) + [-2L, 2L]^d \times [0, (K + 1) T] \vspace*{4pt} \\
      B_{m, n} & \n = \n & (2mL e_1, (n + 1) KT) + [-2L, 2L]^d \times [0, T] \end{array} $$
 for all~$(m, n) \in \Lat_1$ where~$e_1$ is the first unit vector in~$\Z^d$.
 Let also
 $$ E_{m, n} = \hbox{``there is a translate of~$D$ contained in~$B_{m, n}$ that is fully infected''}, $$
 and declare site~$(m, n)$ to be a good site when~$E_{m, n}$ occurs.
 Bezuidenhout and Grimmett proved that, for all infection rates~$\beta_1 > \beta_c$ and all~$\ep > 0$ small, the scale parameters can be chosen large enough that the set of good sites dominates stochastically the set of wet sites in an oriented site percolation process on~$\vec \Lat_1$ with parameter~$1 - \ep$.
 More precisely, there exists a collection of good events~$G_{m, n}$ that only depend on the graphical representation in~$A_{m, n}$ such that
\begin{equation}
\label{eq:epidemic-1}
  P_0 (G_{m, n}) \geq 1 - \ep \quad \hbox{and} \quad E_{m, n} \cap G_{m, n} \subset E_{m - 1, n + 1} \cap E_{m + 1, n + 1}.
\end{equation}
 In other words, a translate of~$D$ fully infected in~$B_{m, n}$ will produce, with probability at least~$1 - \ep$, two translates of~$D$ fully infected in the two blocks immediately above.
 Choosing~$\ep > 0$ small so that the percolation process with parameter~$1 - \ep$ is supercritical, this construction shows that the contact process with parameter~$\beta_1 > \beta_c$ survives locally.
 Returning to the epidemic model, we denote by~$G_{m, n}^0$ the event that there are no dots in the block~$A_{m, n}$.
 On this event, the epidemic model and the contact process share the same graphical representation in the block so
\begin{equation}
\label{eq:epidemic-2}
  E_{m, n} \cap (G_{m, n} \cap G_{m, n}^0) \subset E_{m - 1, n + 1} \cap E_{m + 1, n + 1}.
\end{equation}
 In addition, the scale parameters being fixed, letting~$S = (4L + 1)^d (K + 1) T$ be the size~(number of sites times length of the time interval) of~$A_{m, n}$, there exists~$\gamma_- > 0$ small such that
\begin{equation}
\label{eq:epidemic-3}
  P_{\gamma} (G_{m, n}^0) = P (\poisson (\gamma S) = 0) = \exp (- \gamma S) \geq 1 - \ep
\end{equation}
 for all~$\gamma < \gamma_-$.
 In particular, for all~$\gamma < \gamma_-$,
\begin{equation}
\label{eq:epidemic-4}
  P_{\gamma} (G_{m, n} \cap G_{m, n}^0) \geq 1 - P_0 (G_{m, n}^c) - P_{\gamma} ((G_{m, n}^0)^c) \geq 1 - 2 \ep.
\end{equation}
 Combining~\eqref{eq:epidemic-2} and~\eqref{eq:epidemic-4} shows that~\eqref{eq:epidemic-1} also holds~(with~$2 \ep$ instead of~$\ep$ and different good events) for the epidemic model with~$\gamma < \gamma_-$.
 Because~$\ep$ can be chosen arbitrarily small, this implies that the infection survives locally, which proves Theorem~\ref{th:bottom-right}.a. \\ \\
{\bf Proof of Theorem~\ref{th:top-left}.a}.
 Let~$\beta_2 > \beta_c$ and think of the contact process
 $$ \zeta_t^{\infty} : \Z^d \longrightarrow \{0, 2 \} \quad \hbox{where} \quad 0 = \hbox{healthy} \quad \hbox{and} \quad 2 = \hbox{infected} $$
 with parameter~$\beta_2$ as being constructed from the graphical representation depicted in Figure~\ref{fig:GR} by only using the type~2 infection arrows and the recovery crosses.
 Note that the probability associated to this contact process is now~$P_{\infty}$.
 In addition, because the process is supercritical, the two statements in~\eqref{eq:epidemic-1} still hold but for the probability~$P_{\infty}$ instead of~$P_0$, i.e.,
\begin{equation}
\label{eq:epidemic-5}
  P_{\infty} (G_{m, n}) \geq 1 - \ep \quad \hbox{and} \quad E_{m, n} \cap G_{m, n} \subset E_{m - 1, n + 1} \cap E_{m + 1, n + 1}.
\end{equation}
 To prove local survival of the infection when~$\gamma$ is large, note that the process behaves like its coupled contact process if each time a~2 in the contact process tries to infect a neighbor, the same happens in the epidemic model, and if none of the 1s in the epidemic model tries to infect a neighbor.
 From the point of view of the graphical representation, this occurs in the block~$A_{m, n}$ if each time there is an arrow pointing at~$(x, t)$ in the block, there is a dot at site~$x$ before the next arrow starting at site~$x$.
 In particular, denoting this event by~$G_{m, n}^{\infty}$, the analog of~\eqref{eq:epidemic-2} holds:
\begin{equation}
\label{eq:epidemic-6}
  E_{m, n} \cap (G_{m, n} \cap G_{m, n}^{\infty}) \subset E_{m - 1, n + 1} \cap E_{m + 1, n + 1}.
\end{equation}
 Following the same approach as before, the last step to prove the theorem is to show that the probability of~$G_{m, n}^{\infty}$ is close to one when~$\gamma$ large.
 To do this, let
 $$ H_{m, n} = \hbox{``there are less than~$2H = 2 (\beta_1 + \beta_2) S$ arrows pointing at~$A_{m, n}$''}, $$
 where~$S$ is the size of~$A_{m, n}$.
 Because the arrows occur at rate~$\beta_1 + \beta_2$ at each site,
\begin{equation}
\label{eq:epidemic-7}
  P_{\gamma} (H_{m, n}^c) = P (\poisson ((\beta_1 + \beta_2) S) \geq 2 (\beta_1 + \beta_2) S) \leq \ep / 2
\end{equation}
 for all~$K, L, T$ large.
 Now, let~$X$ and~$Y$ be exponentially distributed with rate~$\beta_1 + \beta_2$ and~$\gamma$, respectively.
 The scale parameters being fixed so that~\eqref{eq:epidemic-5} and~\eqref{eq:epidemic-7} hold, because the dots appear at rate~$\gamma$ at each site, it follows from the superposition property that
\begin{equation}
\label{eq:epidemic-8}
  P_{\gamma} ((G_{m, n}^{\infty})^c \,| \,H_{m, n}) \leq 2H P (X < Y) = 2H \bigg(\frac{\beta_1 + \beta_2}{\beta_1 + \beta_2 + \gamma} \bigg) \leq \ep / 2
\end{equation}
 for all~$\gamma$ larger than some~$\gamma_+ < \infty$.
 Combining~\eqref{eq:epidemic-7} and~\eqref{eq:epidemic-8}, we deduce that
 $$ \begin{array}{rcl}
      P_{\gamma} ((G_{m, n}^{\infty})^c) & \n = \n &
      P_{\gamma} ((G_{m, n}^{\infty})^c \,| \,H_{m, n}) P_{\gamma} (H_{m, n}) + P_{\gamma} ((G_{m, n}^{\infty})^c \,| \,H_{m, n}^c) P_{\gamma} (H_{m, n}^c) \vspace*{4pt} \\ & \n \leq \n &
      P_{\gamma} ((G_{m, n}^{\infty})^c \,| \,H_{m, n}) + P_{\gamma} (H_{m, n}^c) \leq \ep, \end{array} $$
 which, together with~\eqref{eq:epidemic-5}, implies that, for all~$\gamma > \gamma_+$,
\begin{equation}
\label{eq:epidemic-9}
  P_{\gamma} (G_{m, n} \cap G_{m, n}^{\infty}) \geq 1 - P_{\infty} (G_{m, n}^c) - P_{\gamma} ((G_{m, n}^{\infty})^c) \geq 1 - 2 \ep.
\end{equation}
 As previously, combining~\eqref{eq:epidemic-6} and~\eqref{eq:epidemic-9} implies that, for all~$\gamma > \gamma_+$, the infection survives locally, which completes the proof of Theorem~\ref{th:top-left}.a.


\section{Proof of Theorems~\ref{th:top-left}.b and~\ref{th:bottom-right}.b (extinction)}
\label{sec:recovery}
 This section is devoted to the extinction parts of Theorems~\ref{th:top-left} and~\ref{th:bottom-right}.
 The proof relies on the same three ingredients as for the survival parts:
 coupling of the epidemic model with the contact processes~$\zeta_t^0$ and~$\zeta_t^{\infty}$, block construction, and perturbation arguments at~$\gamma = 0$ and at~$\gamma = \infty$.
 The main difference with the proof of survival is the block construction. \\ \\
{\bf Proof of Theorem~\ref{th:top-left}.b}.
 As previously, let~$\zeta_t^0$ be the basic contact process with parameter~$\beta_1$ coupled with the epidemic model, and assume that~$\beta_1 < \beta_c$.
 To compare the process properly rescaled in space and time with oriented site percolation, we now let~$\Lat_2 = \Z^d \times \Z_+$, which we turn into a directed graph~$\vec \Lat_2$ by placing arrows
 $$ \begin{array}{rcl}
     (m, n) \to (m', n') & \Longleftrightarrow & |m_1 - m_1'| + \cdots + |m_d - m_d'| + |n - n'|= 1 \ \ \hbox{and} \ \ n \leq n'. \end{array} $$
 One can think of the horizontal arrows as expansion in space and of the vertical arrows as persistence in time.
 Having an integer~$L$ playing the role of the space and time scales, we let
 $$ \begin{array}{rcl}
      A_{m, n} & \n = \n & (2mL, nL) + [-2L, 2L]^d \times [0, 2L] \vspace*{4pt} \\
      B_{m, n} & \n = \n & (2mL, nL) + [-L, L]^d \times [L, 2L] \end{array} $$
 for all~$(m, n) \in \Lat_2$.
 Then, we define the events
 $$ E_{m, n} = \hbox{``the space-time block~$B_{m, n}$ is fully healthy''}, $$
 and declare site~$(m, n)$ to be a good site when~$E_{m, n}$ occurs.
 To prove that the set of good sites dominates the set of wet sites in an oriented site percolation process on~$\vec \Lat_2$ with parameter arbitrarily close to one, the idea is to show that, with high probability when~$L$ is large, none of the infection paths starting from the bottom or the periphery of~$A_{m, n}$ reaches the block~$B_{m, n}$.
 For~$s < t$, we say that there is an infection path from~$(y, s)$ to~$(x, t)$ if we can go from the first space-time point to the second one going up in the graphical representation, crossing the infection arrows from tail to head but avoiding the recovery crosses.
 Bezuidenhout and Grimmett~\cite{Bezuidenhout_Grimmett_1991} proved the following exponential decay for the radius and duration of the infection paths in the subcritical phase:
 starting with a single infection at the origin, and identifying~$\zeta_s^0$ with the set of infected sites at time~$s$,
\begin{equation}
\label{eq:extinction-1}
  P \bigg(\bigcup_{s \geq 0} \zeta_s^0 \not \subset [-r, r]^d \bigg) \leq e^{- \alpha_1 r} \quad \hbox{and} \quad P (\zeta_t^0 \neq \varnothing) \leq e^{- \alpha_2 t}
\end{equation}
 for some~$\alpha_1, \alpha_2 > 0$.
 Let~$\ep > 0$ be small, and let
 $$ S_p = 2L ((4L + 1)^d - (4L - 1)^d) \quad \hbox{and} \quad S_b = (4L + 1)^2 $$
 be the size~(number of sites times length of the time interval) of the periphery of the block~$A_{m, n}$, and the size~(number of sites) of the bottom of the block.
 Consider also the events
 $$ H_{m, n} = \hbox{``there are less than~$2 \beta_1 S_p$ arrows starting from the pheriphery of~$A_{m, n}$''}. $$
 Using that an infection path starting from the periphery or bottom of the block~$A_{m, n}$ must have spatial/temporal length at least~$L$ to reach~$B_{m, n}$, it follows from~\eqref{eq:extinction-1} that
\begin{equation}
\label{eq:extinction-2}
\begin{array}{l}
  P (\hbox{path from periphery or bottom of~$A_{m, n}$ reaching~$B_{m, n}$}) \vspace*{4pt} \\ \hspace*{30pt} \leq
  P (\hbox{path from periphery} \,| \,H_{m, n}) P (H_{m, n}) \vspace*{4pt} \\ \hspace*{60pt} + \
  P (\hbox{path from periphery} \,| \,H_{m, n}^c) P (H_{m, n}^c) + P (\hbox{path from bottom}) \vspace*{4pt} \\ \hspace*{30pt} \leq
  P (\hbox{path from periphery} \,| \,H_{m, n}) + P (H_{m, n}^c) + P (\hbox{path from bottom}) \vspace*{4pt} \\ \hspace*{30pt} \leq
  2 \beta_1 S_p \,e^{- \alpha_1 L} + P (\poisson (\beta_1 S_p) \geq 2 \beta_1 S_p) + S_b \,e^{- \alpha_2 L} \leq \ep \end{array}
\end{equation}
 for all~$L$ sufficiently large.
 In particular, there exists a collection of good events~$G_{m, n}$ that only depend on the graphical representation in~$A_{m, n}$ such that
\begin{equation}
\label{eq:extinction-3}
  P_0 (G_{m, n}) \geq 1 - \ep \quad \hbox{and} \quad G_{m, n} \subset E_{m, n}.
\end{equation}
 In words, for all infection rates~$\beta_1 < \beta_c$ and all~$\ep > 0$ small, the scale parameter~$L$ can be chosen large enough that the set of good sites dominates stochastically the set of wet sites in an oriented site percolation process on~$\vec \Lat_2$ with parameter~$1 - \ep$.
 For~$\ep > 0$ small enough, not only the set of open sites percolates but also the probability of a path of closed~(not open) sites with length at least~$n$ starting at~$(0, 0)$ decays exponentially with~$n$~(see~\cite[Section~8]{vandenBerg_Grimmett_Schinazi_1998} for a proof).
 Because the infection cannot appear spontaneously, this shows local extinction.
 Using a perturbation argument similar to~\eqref{eq:epidemic-3} extends this result to the epidemic model with~$\gamma > 0$ small. \\ \\
{\bf Proof of Theorem~\ref{th:bottom-right}.b}.
 As in the previous section, let~$\zeta_t^{\infty}$ be the~(subcritical) contact process with parameter~$\beta_2 < \beta_c$ coupled with the epidemic model.
 Then, the two properties in~\eqref{eq:extinction-3} again hold for this process and can be extended to the epidemic model with~$\gamma < \infty$ large~(depending on the scale parameter~$L$) using the estimates~\eqref{eq:epidemic-7} and~\eqref{eq:epidemic-8}.


\section{Proof of Theorem~\ref{th:two-stage}}
\label{sec:two-stage}
 Throughout this section, we assume that~$\beta_1 = 0$, meaning that the~1s are not contagious.
 Because only the~2s can infect nearby healthy individuals, in order to prove local extinction of the infection when~$\gamma$ is small, the idea is to show that the set of~2s is dominated by a subcritical Galton--Watson branching process.
 More precisely, the first objective is to prove that, starting with a single~2 in an otherwise healthy population, the infection paths that originate from this~2 cannot expand too far or live too long.
 The next lemma shows how to control the radius of the infection.
\begin{lemma}
\label{lem:GWBP-radius}
 Let~$\beta_1 = 0$ and~$\gamma < 1 / (4d - 1)$.
 For all~$r > 0$, starting with a single~2 at the origin in an otherwise healthy population, and identifying~$\xi_s$ with the set of infected sites,
 $$ P \bigg(\bigcup_{s \geq 0} \xi_s \not \subset [-r, r]^d \bigg) \leq \bigg(\frac{4d \gamma}{1 + \gamma} \bigg)^{r - 1} \to 0 \quad \hbox{as} \ r \to \infty. $$
\end{lemma}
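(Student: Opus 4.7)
The plan is to compare the epidemic model to a subcritical branching process built on the graphical representation, exploiting the fact that with $\beta_1 = 0$ only the symptomatic (type-$2$) individuals can transmit the disease. Whenever a site first becomes asymptomatic (type~$1$), the next event attached to its vertex clocks is either a dot (rate~$\gamma$, turning it into a~$2$) or a cross (rate~$1$, recovering it). By the standard race between independent exponentials, the site becomes symptomatic rather than recovering with probability exactly $\gamma/(1+\gamma)$, and by memorylessness this outcome is independent of the dot/cross clocks at the other sites.

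I would first observe that if the infection ever escapes $[-r, r]^d$, then the graphical representation must contain what I shall call an \emph{active chain}: a sequence of sites $y_0 = 0, y_1, \ldots, y_k$ with $\norm{y_{i+1} - y_i} = 1$ and $k \geq r$ such that each $y_i$ for $1 \leq i \leq k$ enters state~$2$ at some time, with each $y_i$ turned into a $1$ by an infection arrow emitted by~$y_{i-1}$ during its type-$2$ reign. Indeed, the first infected site outside the box must have been turned into a~$1$ by an adjacent~$2$, and tracing this~$2$ back through its chain of symptomatic ancestors eventually reaches the initial~$2$ at the origin. Since any site outside $[-r, r]^d$ is at graph distance at least~$r+1$ from the origin, this ancestor chain has length at least~$r$.

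The probability that a specific chain of length~$k$ is active is at most $(\gamma/(1+\gamma))^k$: activity requires the dot/cross race to be won by the dot at each of the $k$ sites $y_1, \ldots, y_k$, and these races are independent because they depend on clocks at distinct sites. The number of nearest-neighbor chains of length~$k$ emanating from the origin is at most $(2d)^k$, so Markov's inequality applied to the number of active chains gives
$$
  P\!\left( \bigcup_{s \geq 0} \xi_s \not\subset [-r, r]^d \right) \leq \sum_{k \geq r} (2d)^k \left( \frac{\gamma}{1+\gamma} \right)^k = \sum_{k \geq r} \left( \frac{2d\gamma}{1+\gamma} \right)^k.
$$
Under the hypothesis $\gamma < 1/(4d-1)$ one checks that $\gamma/(1+\gamma) < 1/(4d)$, hence $2d\gamma/(1+\gamma) < 1/2$, so the geometric series is bounded by $2\,(2d\gamma/(1+\gamma))^r$. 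A short algebraic calculation, using $4d\gamma/(1+\gamma) < 1$ once more, shows this is in turn at most $(4d\gamma/(1+\gamma))^{r-1}$ for every $r \geq 1$, yielding the claimed bound.

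The main obstacle is making the tracing-back argument rigorous, since sites may be revisited or re-infected several times and the physical chain of ancestors need not be a geodesic. The key simplification is that the active-chain condition only requires each chain site to eventually turn symptomatic, a weaker condition than specifying the full timing of the infections; this weaker condition factorizes across distinct sites through the independent dot/cross clocks, and that independence is exactly what produces the clean geometric-series bound.
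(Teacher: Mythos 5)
There is a genuine gap in the per-chain probability bound. You bound the probability that a fixed chain $y_0, \ldots, y_k$ is active by $(\gamma/(1+\gamma))^k$, on the grounds that each $y_i$ must win one dot/cross race. But your ``active chain'' event only requires each $y_i$ to \emph{eventually} become a $2$ having been infected by $y_{i-1}$'s $2$-reign, and with $\beta_2 = \infty$ a $2$ at $y_{i-1}$ reinfects $y_i$ instantaneously every time $y_i$ recovers. So the relevant event for site $y_i$ is not a single exponential race but a sequence of races, one per reinfection; the probability that one of them is eventually won by a dot, conditional on $y_i$ being fed by a $2$ at $y_{i-1}$, works out to $\gamma(2+\gamma)/(1+\gamma)^2 \approx 2\gamma$, roughly twice your $\gamma/(1+\gamma)$. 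A union bound with the correct per-step factor would produce $\sum_{k \geq r} (4d\gamma/(1+\gamma))^k = \mu^r/(1-\mu)$, which is not bounded by $\mu^{r-1}$ unless $\mu \leq 1/2$, so even the corrected chain-count does not reach the stated inequality. Your last paragraph gestures at ``the independent dot/cross clocks'' factorizing across sites, but this is precisely where the reinfections enter and where the bound breaks.

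The paper's proof (following Foxall) avoids both problems at once by working with expectations rather than chain-by-chain probabilities. It organizes the $2$s into a parent--child tree, then bounds the \emph{expected} number of $1$s spawned by a single $2$: $2d$ initial infections plus, by a $\geometric(1/2)$ race between the focal site's recovery clock and each neighbor's recovery clock, an expected one reinfection per neighbor, for $4d$ total. Each such $1$ becomes a $2$ with probability $\gamma/(1+\gamma)$, giving mean offspring $\mu = 4d\gamma/(1+\gamma) < 1$, and then $P(X_{r-1} \neq 0) \leq E(X_{r-1}) = \mu^{r-1}$ via the martingale $\mu^{-n}X_n$. The factor you lost (two $1$s per neighbor instead of one) is exactly the reinfection contribution, and the generation-$\!(r-1)$-nonempty bound is what replaces your geometric series with the clean power. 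If you want to rescue a chain-counting argument, you would need to track the expected number of parent--child tree paths along each spatial chain, which brings you back to essentially the branching-process computation.
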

\begin{proof}
 In view of the monotonicity of the set of infected sites with respect to the symptomatic infection rate when~$\beta_1 < \beta_2$, it suffices to prove the result when~$\beta_2 = \infty$.
 In this case, the process is dominated stochastically by a subcritical Galton--Watson branching process.
 Each time a~2 at site~$x$ gives birth to a~1 at a neighboring site~$y \in N_x$ that turns into a~2,
\begin{itemize}
 \item we think of the~2 at site~$x$ as the parent of the~2 at site~$y$ and \vspace*{4pt}
 \item we think of the~2 at site~$y$ as the child of the~2 at site~$x$,
\end{itemize}
 which induces a partition of the set of~2s into generations, starting with the initial single~2 belonging to generation zero.
 The domination of this system by a subcritical Galton--Watson branching process follows from an argument due to Foxall~\cite[Proposition~2]{Foxall_2015} that we briefly explain.
 In the limiting case~$\beta_2 = \infty$, each~2 gives birth instantaneously to a~1 onto each of its healthy neighbors, whose number is bounded by the degree~$2d$ of the underlying graph.
 An additional~1 appears instantaneously for each recovery mark in the neighborhood of~$x$ occurring before the next recovery mark at~$x$.
 By the superposition property, the number of such recovery marks is the sum of~$2d$ independent shifted geometric random variables with parameter one-half.
 In particular, the expected number of~1s generated by the~2 at site~$x$ is bounded by
\begin{equation}
\label{eq:GWBP-radius-1}
  2d + 2d \,E (\geometric (1/2) - 1) = 2d + 2d (2 - 1) = 4d.
\end{equation}
 Using again the superposition property implies that each of these~1s turns into a~2 before it recovers with probability~$\gamma / (1 + \gamma)$, so the number of~2s at generation~$n$ is dominated by a Galton--Watson branching process~$X_n$ in which the mean number of offspring per individual is
 $$ \mu = 4d \gamma / (1 + \gamma) < 1. $$
 Using that the process~$\mu^{-n} X_n$ is a martingale, the probability that this Galton--Watson branching process lives for at least~$n$ generations is bounded by
\begin{equation}
\label{eq:GWBP-radius-2}
  P (X_n \neq 0) \leq E (X_n) = \mu^n E (X_0) = \mu^n = (4d \gamma / (1 + \gamma))^n.
\end{equation}
 To prove that the infection generated by the initial~2 cannot spread too far, note that, because it takes at least~$r - 1$ generations to bring a~2 outside the spatial box~$[-r + 1, r - 1]^d$, which is necessary to produce an infected site outside the box~$[-r, r]^d$,
 $$ P \bigg(\bigcup_{s \geq 0} \xi_s \not \subset [-r, r]^d \bigg) \leq P (X_{r - 1} \neq 0) \leq \bigg(\frac{4d \gamma}{1 + \gamma} \bigg)^{r - 1} $$
 according to~\eqref{eq:GWBP-radius-2}.
 This completes the proof.
\end{proof} \\ \\
 Using the previous branching process, we now control the duration of the infection.
\begin{lemma}
\label{lem:GWBP-time}
 Let~$\beta_1 = 0$ and~$\gamma < 1 / (4d - 1)$.
 Then, starting with a single~2 at the origin in an otherwise healthy population,
 $$ P (T_0 < \infty) = 1 \quad \hbox{where} \quad T_0 = \inf \{t : \xi_t = \varnothing \}. $$
\end{lemma}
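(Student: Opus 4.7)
By the monotonicity with respect to $\beta_2$ established in Section~\ref{sec:harris} (applicable since $\beta_1 = 0 < \beta_2$), the hitting time $T_0$ is stochastically increasing in $\beta_2$, so it suffices to prove $T_0 < \infty$ a.s.\ in the limiting regime $\beta_2 = \infty$. In this regime I would reuse \emph{verbatim} the Galton--Watson setup already constructed in the proof of Lemma~\ref{lem:GWBP-radius}: partition the 2s into generations by the parent--child relation induced by the infection events, so that the number of 2s in generation~$n$ is dominated by a Galton--Watson branching process $X_n$ with offspring mean $\mu = 4d\gamma / (1 + \gamma) < 1$.

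The first step is a total-progeny estimate. Subcriticality gives $E(X_n) \leq \mu^n$ as already observed in~\eqref{eq:GWBP-radius-2}, whence
\[ E \Big(\sum_{n \geq 0} X_n \Big) \leq \frac{1}{1 - \mu} < \infty, \]
so the total number $N_2$ of 2s ever produced is a.s.\ finite. Applying the Foxall bound~\eqref{eq:GWBP-radius-1} conditionally on the history up to the birth of each 2, each 2 generates at most $4d$ new 1s in expectation, hence by iterated expectation the total number $N_1$ of 1s ever created satisfies $E(N_1) \leq 4d/(1-\mu) < \infty$, and so $N_1 < \infty$ a.s.\ as well.

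From here the conclusion follows cleanly. Every site passes through at most one 1-episode and at most one 2-episode before returning to the healthy state, so the total number of infected episodes in the process is bounded by $N_1 + N_2 + 1$ and is a.s.\ finite. Each 1-episode has duration dominated by an $\exponential(1 + \gamma)$ variable and each 2-episode by an $\exponential(1)$ variable, hence is a.s.\ finite; by induction on episode order the starting time of each episode is also a.s.\ finite. Writing $T_0$ as the maximum of finitely many quantities of the form (start time + duration), each a.s.\ finite, yields $T_0 < \infty$ a.s. The main obstacle, and the step requiring most care, is the \emph{conditional} validity of the $4d$ offspring bound from~\eqref{eq:GWBP-radius-1}: that bound was computed for an isolated 2 with healthy neighbors, and for the iterated-expectation argument one must check, via the strong Markov property at the birth time of the 2 and the observation that non-healthy neighbors can only suppress the creation of new 1s, that the Foxall coupling persists uniformly through generations.
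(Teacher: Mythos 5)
Your proof is correct and takes essentially the same approach as the paper: both reduce to $\beta_2 = \infty$ via monotonicity, reuse the Galton--Watson setup from Lemma~\ref{lem:GWBP-radius} to show $E(N_2) < \infty$ and hence $N_1 < \infty$ almost surely, and conclude that $T_0$ is a combination of finitely many almost surely finite quantities. The only cosmetic difference is the last step, where the paper writes the bound $T_0 \leq Y_1 + \cdots + Y_{N_1}$ as a sum of sojourn times while you take a maximum over episode end-times after an induction on episode order; both are valid.
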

\begin{proof}
 Let~$N_2$ be the total number of individuals in the Galton--Watson branching process introduced in the previous proof, and let~$N_1$ be the total number of~1s generated by these~2s.
 By the monotone convergence theorem and the exponential decay in~\eqref{eq:GWBP-radius-2},
\begin{equation}
\label{eq:GWBP-time-1}
  E (N_2) = E \bigg(\sum_{n = 0}^{\infty} \,X_n \bigg) = \sum_{n = 0}^{\infty} \,E (X_n) = \sum_{n = 0}^{\infty} \,\mu^n = \frac{1}{1 - 4d \gamma / (1 + \gamma)} < \infty,
\end{equation}
 therefore~$N_2$ is almost surely finite.
 Because, according to~\eqref{eq:GWBP-radius-1}, the expected number of~1s created by each~2 is bounded by~$4d$, the random variable~$N_1$ is almost surely finite as well.
 Labeling these~1s in the order they are created, letting~$Y_i$ be the time it takes for the~$i$th 1 to recover, and using that the random variables~$Y_i$ are almost surely finite and that
 $$ T_0 \leq Y_1 + Y_2 + \cdots + Y_{N_1}, $$
 we deduce that~$T_0$ is also almost surely finite.
\end{proof} \\ \\
 It directly follows from Lemma~\ref{lem:GWBP-time} that, starting with a single infected individual at the origin, the infection dies out globally.
 Proving local extinction requires more work.
 The basic idea is to use the exponential decay of the radius of the infection paths in Lemma~\ref{lem:GWBP-radius} to show that, regardless of the initial configuration, the probability of an infection path starting outside a large box centered at the origin and reaching the origin can be made arbitrarily small.
 Because this box is bounded, using that the time to extinction of each infection path is almost surely finite according to Lemma~\ref{lem:GWBP-time}, we also have that the infection paths starting inside the box die out in a finite time.
 In particular, the probability that the origin is infected tends to zero as time goes to infinity.
\begin{lemma}
\label{lem:GWBP-extinction}
 Let~$\beta_1 = 0$ and~$\gamma < 1 / (4d - 1)$. Then,
 $$ \begin{array}{l} \lim_{t \to \infty} P (\xi_t (0) \neq 0) = 0 \quad \hbox{for all} \quad \beta_2 \geq 0. \end{array} $$
\end{lemma}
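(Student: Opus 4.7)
The plan, following the roadmap sketched immediately before the lemma, is to decompose the event $\{\xi_t(0) \neq 0\}$ according to which initially-infected site seeds the infection at the origin, control the contributions from distant sources uniformly in~$t$ via the radius estimate of Lemma~\ref{lem:GWBP-radius}, and annihilate the contributions from nearby sources by sending $t \to \infty$ via the finite-extinction-time estimate of Lemma~\ref{lem:GWBP-time}.

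First I would establish a union bound over initial sources. Using that $\beta_1 = 0 < \beta_2$, the monotonicity of the set of infected sites in the initial configuration established in Section~\ref{sec:harris} gives, with $\xi_t^y$ denoting the process started from a single type-2 individual at~$y$ in an otherwise healthy population,
$$ \{\xi_t(0) \neq 0\} \subset \bigcup_{y \,:\, \xi_0(y) \neq 0} \{\xi_t^y(0) \neq 0\}. $$
Taking expectations and exploiting the independence of~$\xi_0$ from the graphical representation together with translation invariance, I get
$$ P(\xi_t(0) \neq 0) \leq \sum_{y \in \Z^d} P(\xi_t^y(0) \neq 0). $$

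Next, given~$\ep > 0$, I would split the sum at some radius~$R$. For $y \notin [-R, R]^d$, translation invariance of the graphical representation identifies $P(\xi_t^y(0) \neq 0)$ with the probability that the process started from a single~2 at the origin infects $-y$ at some time before~$t$, which by Lemma~\ref{lem:GWBP-radius} is bounded by an exponentially decaying function of the distance from~$y$ to the origin with base $\mu = 4d\gamma/(1+\gamma) < 1$. Since the number of sites at $\ell^\infty$-distance~$k$ from the origin grows only polynomially in~$k$, the resulting polynomial-exponential tail can be made smaller than~$\ep / 2$ by choosing~$R$ large. For the finitely many remaining sites with $y \in [-R, R]^d$, Lemma~\ref{lem:GWBP-time} gives $P(\xi_t^y \neq \varnothing) \to 0$ as $t \to \infty$, so the inner sum is also smaller than~$\ep / 2$ once~$t$ is sufficiently large. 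Combining, $\limsup_{t \to \infty} P(\xi_t(0) \neq 0) \leq \ep$, and since~$\ep$ is arbitrary the limit vanishes.

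The main technical point that requires care is the union-bound step: although monotonicity of the set of infected sites in the initial configuration is routine for the standard contact process, our process has three states and infection paths in the graphical representation use both type-2 arrows and $1 \to 2$ transition dots. One must check that if an infection path from $(y, 0)$ to $(0, t)$ is active in the process started from the full configuration~$\xi_0$, then it remains active when every initial infection other than a type-2 individual at~$y$ is suppressed. This reduces to a short induction along the segments of the path using the graphical representation, but it is the only ingredient beyond assembling the two preceding lemmas.
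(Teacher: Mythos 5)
Your proof is correct and follows essentially the same strategy as the paper's: decompose over initial sources via a pathwise union bound, split the sum at a radius~$R$, control the distant sources uniformly in~$t$ with the exponential radius estimate of Lemma~\ref{lem:GWBP-radius}, and kill the finitely many nearby sources as $t \to \infty$ using Lemma~\ref{lem:GWBP-time}. One small imprecision worth noting: the pathwise inclusion you rely on is not what Section~\ref{sec:harris} actually establishes (that section proves monotonicity in the parameters~$\beta_1, \beta_2, \gamma$, not subadditivity over initial sources), and the correct justification is the induction on events of the graphical representation that you sketch at the end --- which is also exactly the step the paper invokes, equally informally, via the phrase ``the infected region generated by the~2 initially at site~$y$ is dominated by its counterpart when starting with a single~2 at site~$y$''.
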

\begin{proof}
 By monotonicity, it suffices to prove the result for~$\beta_2 = \infty$ and starting from the all~2 configuration.
 In this case, the infected region generated by the~2 initially at site~$y$ is dominated by its counterpart when starting with a single~2 at site~$y$ in an otherwise healthy population.
 In particular, letting~$\ep > 0$ be small, it follows from Lemma~\ref{lem:GWBP-radius} that the probability of an infection path starting from outside the cube~$\Lambda_r = [- r, r]^d$ and ending at the origin is bounded by
\begin{equation}
\label{eq:GWBP-extinction-1}
\begin{array}{l}
\displaystyle \sum_{i = 0}^{\infty} \ \Big(\vol \,(\Lambda_{r + i + 1}) - \vol \,(\Lambda_{r + i}) \Big) \bigg(\frac{4d \gamma}{1 + \gamma} \bigg)^{r + i} \vspace*{0pt} \\ \hspace*{75pt} \leq
\displaystyle \sum_{i = 0}^{\infty} \ 2d (2r + 2i + 3)^{d - 1} \bigg(\frac{4d \gamma}{1 + \gamma} \bigg)^{r + i} \vspace*{0pt} \\ \hspace*{150pt} =
\displaystyle 2d \ \sum_{j = r}^{\infty} \ (2j + 3)^{d - 1} \bigg(\frac{4d \gamma}{1 + \gamma} \bigg)^j \leq \ep / 2 \end{array}
\end{equation}
 for some~$r < \infty$ large fixed from now on.
 Let~$T_y$ be the extinction time of the infected cluster starting at site~$y$, and let~$\tau_r$ be the extinction time of the infected region generated by all the~2s initially in~$\Lambda_r$.
 The integer~$r$ being fixed, it follows from Lemma~\ref{lem:GWBP-time} that
 $$ \begin{array}{l}
      P (\tau_r < \infty) \geq P (\max_{y \in \Lambda_r} T_y < \infty) \geq P (\sum_{y \in \Lambda_r} T_y < \infty) = 1. \end{array} $$
 This shows that time~$\tau_r$ is almost surely finite, therefore
\begin{equation}
\label{eq:GWBP-extinction-2}
  P (\tau_r > T) \leq \ep / 2 \quad \hbox{for some} \quad T < \infty.
\end{equation}
 Combining~\eqref{eq:GWBP-extinction-1} which controls the infection paths starting from outside~$\Lambda_r$ and~\eqref{eq:GWBP-extinction-2} which controls the infection paths starting from inside~$\Lambda_r$, we deduce that the probability of the origin being infected at some time larger than~$T$ is bounded by~$\ep$, which proves the lemma.
\end{proof} \\ \\
 Because the evolution rules of the process are translation-invariant, the previous lemma also applies to each site of the lattice, which implies that, regardless of the initial configuration, the density of infected sites converges to zero, and shows the theorem.


\section{Proof of Theorem~\ref{th:forest-fire}}
\label{sec:forest-fire}
 Recall that, under the assumptions of Theorem~\ref{th:forest-fire}, the rate~$\beta_2 = 0$, the rate~$\gamma$ is fixed and finite, and the process starts with a single~1 at the origin in an otherwise healthy population.
 To prove global survival, the idea is to use a construction inspired from Cox and Durrett~\cite{Cox_Durrett_1988} to show that the set of sites that ever get infected dominates a certain independent site percolation process that is supercritical when~$\beta_1$ is sufficiently large.
 To each site~$x$ and each oriented edge~$\vec{xy}$ where site~$y$ is a nearest neighbor of site~$x$, we attach independent random variables
 $$ \tau (x, y) = \exponential (\beta_1 / 2d) \quad \hbox{and} \quad \sigma (x) = \exponential (1 + \gamma). $$
 Then, we say that directed edge~$\vec{xy}$ is
 $$ \hbox{open when} \ \tau (x, y) < \sigma (x) \quad \hbox{and} \quad \hbox{closed when} \ \tau (x, y) \geq \sigma (x), $$
 and define the percolation cluster
 $$ C_0 = \{x \in \Z^d : \hbox{there is a directed path~$0 \to x$ of open edges} \}. $$
 Thinking of~$\tau (x, y)$ as the time it takes for a~1 at site~$x$ to spread germs to~$y$, and thinking of~$\sigma (x)$ as the time it takes for a~1 at site~$x$ to either recover, which occurs at rate one, or become symptomatic, which occurs at rate~$\gamma$, the presence of a directed open path~$0 \to x$ in the percolation process implies that the infection spreads up to site~$x$ in the interacting particle system.
 Using also the memoryless property of the exponential distribution, the epidemic model and the directed bond percolation process can be coupled in such a way that the sites that ever get infected contains the percolation cluster~$C_0$ from which we deduce the lower bound
\begin{equation}
\label{eq:forest-fire-1}
  P (\xi_t \neq \varnothing \ \hbox{for all} \ t) \geq P (|C_0| = \infty).
\end{equation}
 Therefore, to prove the theorem, it suffices to show that percolation occurs with positive probability when the infection rate~$\beta_1$ is large enough.
 Note that whether the directed edges starting from the same site~$x$ are open or closed are not independent events because they all depend on the realization of the random variable~$\sigma (x)$.
 However, declaring site~$x$ to be open if
 $$ \tau (x, y) < \sigma (x) \quad \hbox{for all} \quad y \in N_x $$
 and closed otherwise, whether different sites are open or closed are now independent events because they depend on disjoint collections of independent random variables.
 In addition, because a directed edge~$\vec{xy}$ is open whenever site~$x$ is open, we have
\begin{equation}
\label{eq:forest-fire-2}
\bar C_0 = \{x \in \Z^d : \hbox{there is a directed path~$0 \to x$ of open sites} \} \subset C_0.
\end{equation}
 In view of~\eqref{eq:forest-fire-1}--\eqref{eq:forest-fire-2}, to prove the theorem, it suffices to show that, for~$\beta_1$ large enough, sites are~(independently) open with a probability that exceeds the critical value~$p_c (\Z^d)$ of site percolation, which is known to be strictly less than one when~$d \geq 2$.
 Let
 $$ \tau_1, \tau_2, \ldots, \tau_{2d} = \exponential (\beta_1 / 2d) \quad \hbox{and} \quad \sigma = \exponential (1 + \gamma) $$
 be independent, and let~$\tau = \max (\tau_1, \tau_2, \ldots, \tau_{2d})$.
 Then, conditioning on the value of~$\sigma$, we obtain that the probability~$p (\beta_1, \gamma)$ of a given site being open is equal to
 $$ \begin{array}{rcl}
    \displaystyle P (\tau < \sigma) & \n = \n &
    \displaystyle \int_0^{\infty} P (\tau < t) \,(1 + \gamma) \,e^{- (1 + \gamma) t} \,dt \vspace*{6pt} \\ & \n = \n &
    \displaystyle \int_0^{\infty} (1 - e^{- \beta_1 t / 2d})^{2d} \,(1 + \gamma) \,e^{- (1 + \gamma) t} \,dt \vspace*{2pt} \\ & \n = \n &
    \displaystyle \int_0^{\infty} \sum_{k = 0}^{2d} {2d \choose k} (-1)^k (1 + \gamma) \,e^{- (1 + \gamma + k \beta_1 / 2d) t} \,dt =
    \displaystyle \sum_{k = 0}^{2d} {2d \choose k} \frac{(-1)^k}{1 + \frac{k \beta_1}{2d (1 + \gamma)}}. \end{array} $$
 For each~$\gamma$, the function~$p (\,\cdot \,, \gamma)$ is continuous, increasing on~$\R_+$, and
 $$ \begin{array}{l} p (0, \gamma) = 0 \qquad \hbox{and} \qquad \lim_{\beta_1 \to \infty} p (\beta_1, \gamma) = 1. \end{array} $$
 In particular, there exists a unique~$\bar \beta$ such that~$p (\bar \beta, \gamma) = p_c (\Z^d)$ and this~$\bar \beta$ is finite.
 In addition, it follows from~\eqref{eq:forest-fire-1}--\eqref{eq:forest-fire-2} and strict monotonicity that
 $$ P (\xi_t \neq \varnothing \ \hbox{for all} \ t) \geq P (|C_0| = \infty) \geq P (|\bar C_0| = \infty) > 0 \quad \hbox{for all} \quad \beta_1 > \bar \beta. $$
 This complete the proof of Theorem~\ref{th:forest-fire}.
 For instance, in the two-dimensional case, Kesten~\cite{Kesten_1980} proved that the critical value of undirected bond percolation is equal to one-half.
 This, together with a general result due to Grimmett and Stacey~\cite{Grimmett_Stacey_1998}, implies that
 $$ p_c (\Z^2) \leq 1 - \bigg(1 - \frac{1}{2} \bigg)^3 = \frac{7}{8}, $$
 and one can easily check that, for all~$r > 15.27$,
 $$ \sum_{k = 0}^4 {4 \choose k} \frac{(-1)^k}{1 + kr} = 1 - \frac{4}{1 + r} + \frac{6}{1 + 2r} - \frac{4}{1 + 3r} + \frac{1}{1 + 4r} > \frac{7}{8}. $$
 This shows global survival when~$\beta_1 > 4 \cdot 15.27 \cdot (1 + \gamma) = 61.08 \cdot (1 + \gamma)$. \\

\noindent {\bf Acknowledgments}.
 We thank two referees for mentioning reference~\cite{Deshayes_2014} we were not aware of, and for pointing out a mistake in a preliminary version of the proof of Lemma~\ref{lem:GWBP-radius}.


\bibliographystyle{plain}
\bibliography{biblio.bib}

\end{document}